\title{Uncountable trees and Cohen $\kappa$-reals}
\author{Giorgio Laguzzi (U. Freiburg)} 
\newtheorem{definition2}{Definition}
\newtheorem{fact}[definition2]{Fact}
\newtheorem{lemma}[definition2]{Lemma}
\newtheorem{claim}[definition2]{Claim}
\newtheorem{remark2}[definition2]{Remark}
\newtheorem{theorem}[definition2]{Theorem}
\newtheorem*{Mobservation2}{Main Observation}
\newtheorem{example2}[definition2]{Example}
\newtheorem{corollary}[definition2]{Corollary}
\newtheorem{proposition}[definition2]{Proposition}
\newtheorem{question}{Question}
\newtheorem{question2}{Question}
\newenvironment{definition}{\begin{definition2} \upshape}{\end{definition2}}
\newenvironment{remark}{\begin{remark2} \upshape}{\end{remark2}}
\newcommand{\asacks}{\poset{AS}_\kappa}
\newcommand{\asilver}{\poset{AV}_\kappa}
\newcommand{\cantor}{2^\omega}
\newcommand{\cohen}{\poset{C}_\kappa}
\newcommand{\conc}{\smallfrown}
\newcommand{\club}{\mathrm{Club}}
\newcommand{\clubsacks}{\sacks^\club}
\newcommand{\dom}{\text{dom}}
\newcommand{\ideal}{\mathcal}
\newcommand{\ifif}{\Leftrightarrow}
\newcommand{\force}{\Vdash}
\newcommand{\gcantor}{2^\kappa}
\newcommand{\lav}{\poset{L}}
\newcommand{\laver}{\poset{L}_\kappa}
\newcommand{\laverr}{\poset{L}_{\omega_1}}
\newcommand{\mathias}{\poset{R}_\kappa}
\newcommand{\meager}{\ideal{M}}
\newcommand{\miller}{\poset{M}_\kappa}
\newcommand{\PPi}{\mathbf{\Pi}}
\newcommand{\poset}{\mathbb}
\newcommand{\restric}{{\upharpoonright}}
\newcommand{\sacks}{\poset{S}_\kappa}
\newcommand{\silver}{\poset{V}_\kappa}
\newcommand{\SSigma}{\mathbf{\Sigma}}
\newcommand{\splitting}{\mathsf{Split}}
\newcommand{\stem}{\textsf{stem}}
\newcommand{\successor}{\textsf{succ}}
\newcommand{\term}{\textsc{Term}}
\newcommand{\U}{\ideal{U}}
\begin{document}
\maketitle

\begin{abstract}
We investigate some versions of amoeba for tree forcings in the generalized Cantor and Baire spaces. This answers \cite[Question 3.20]{LLS15} and generalizes a line of research that in the standard case has been studied in \cite{LShV93}, \cite{Sp95} and \cite{Lag14}. Moreover, we also answer questions posed in \cite{FKK14} by Friedman, Khomskii and Kulikov, about the relationships between regularity properties at uncountable cardinals. We show $\SSigma^1_1$-counterexamples to some regularity properties related to trees without club splitting. In particular we prove a strong relationship between the Ramsey and the Baire properties, in slight contrast with the standard case. 
\end{abstract}

\section{Introduction}
This paper is concerned with forcings consisting of uncountable trees. In particular we focus on some issues about pure decision and Cohen $\kappa$-reals, attacking some points raised in \cite[Question 3.20]{LLS15}. The main part of the paper is centered around the investigation of amoeba forcings, and more generally, the effects of adding uncountable generic trees over models of ZFC. The importance of such a topic is that it has crucial applications in questions concerning cardinal invariants associated with tree-ideals and regularity properties. In the standard case, such a topic has been extensively studied; see \cite{LShV93}, \cite{Sp95}, \cite{Bre98} and \cite{Lag14}, \cite{Sp15} for important results in the context of $2^\omega$ and $\omega^\omega$.

When dealing with trees on $\omega$, even if the most natural versions of amoeba usually do not have pure decision, some refinements can be defined in order to even get the Laver property. This is indeed possible for Sacks, Miller, Laver and Mathias forcing, whereas in \cite{Sp15} Spinas has shown this cannot be done for Silver forcing. Rather surprisingly, we show that the situation with trees on $\kappa > \omega$ is completely different, and we are going to show that pure decision gets \emph{very often} lost.  
In the last section, we also present some results about regularity properties for tree-forcings at $\kappa$, showing $\SSigma^1_1$-counterexamples for Mathias and Laver measurability even without club splitting, and we obtain an interesting and rather surprising result connecting the generalizations of Ramsey property and the property of Baire. This contributes to solve some questions raised by Friedman, Khomskii and Kulikov concerning the regularity properties diagram at $\kappa$ (see \cite{FKK14}).

We remark that this reaserch field has largely spread out in the last years among set theorists. In particular, the problems analysed in this paper are part of those collected in \cite{LLS15}, which is the output of a series of workshops which took place in Amsterdam (November 2014), Hamburg (September 2015) and Bonn (September 2016).

\section{Definitons and notation}

Throughout the paper we assume $\kappa$ be an uncountable regular cardinal and $\kappa^{<\kappa}=\kappa$.
The elements in $\lambda^\kappa$ are called $\kappa$-reals or $\kappa$-sequences, where $\lambda$ is also a regular cardinal, usually $\lambda=2$ or $\lambda=\kappa$.  Given $s,t \in \lambda^\kappa$ we use the standard notiation: $s \perp t$ iff neither $s \subseteq t$ nor $t \subseteq s$ (and we say $s$ and $t$ are incompatible).
The following notation is also used.
\begin{itemize}

\item A \emph{tree} $T \subseteq \lambda^{<\kappa}$ is a subset closed under initial segments and its elements are called \emph{nodes}. We consider $< \kappa$-closed trees $T$, i.e., for every $\subseteq$-increasing sequence of length $<\kappa$ of nodes in $T$, the supremum (i.e., union) of these nodes is still in $T$. Moreover, we abuse of notation denoting by $|t|$ the order type of $\dom(t)$ (such a choice is rather standard in the literature). 
\item We say that a $<\kappa$-closed tree $T$ is \emph{perfect} iff for every $s \in T$ there exist $t \supseteq s$ and $\alpha, \beta \in \lambda$, $\alpha \neq \beta$, such that $t^\conc \alpha \in T$ and $t^\conc \beta \in T$; we call such $t$ a \emph{splitting node} (or \emph{splitnode}) and set $\splitting(T):=\{ t \in T: t \text{ is splitting} \}$.
\item 
We say that a splitnode $t \in T$ has \emph{order type} $\alpha$ (and we write $t \in \splitting_\alpha(T)$) iff 
\[ 
|\{ s \in T: s \subsetneq t \land s \in \splitting(T) \}|=\alpha.
\] 
\item $\successor(t,T):= \{ \alpha \in \lambda: t^\conc \alpha \in T \}$, for $t \in T$.
\item $t \in T$ is a \emph{terminal node} iff there are no $s \supsetneq t$ such that $s \in T$, and we write $t \in \term(T)$. 
\item For every tree $T$ we define the \emph{boundary of $T$} $b(T)$ as:
\[
b(T):=\{ t\notin T: \forall s \subsetneq t (s \in T)  \}.
\]
\item Given a tree $S$, let $cl(S)$ denotes the $<$-closure of $S$, i.e., $t \in cl(S)$ iff either $t \in S$ or $t$ is the limit of a $\subseteq$-increasing sequence of length $<\kappa$ of nodes in $S$.
\item We say that $T$ \emph{end-extends} $S$ iff $T \supseteq S$ and for every $t \in T \setminus S$ there exists $s \in \term(cl(S))$ such that $s \subseteq t$.
\item $\stem(T)$ is the longest node in $T$ which is compatible with every node in $T$; $T_t:= \{ s \in T: s \text{ is compatible with } t \}$.

\item Let $p \subset T$ be $<\kappa$-closed, we define
$T {\downarrow} p := \{ t \in T: \exists s \in \term(p) (s \subseteq t \vee t \subseteq s) \}$.
 
\item $[T]:= \{ x \in \lambda^\kappa: \forall \alpha < \kappa (x \restric \alpha \in T) \}$ is called the \emph{set of branches} (or \emph{body}) of $T$. 
\end{itemize}

\vspace{2mm}

We say that a poset $\poset{P}$ is a \emph{tree-forcing} if the conditions are perfect trees in $\lambda^{<\kappa}$ with the property that if $T \in \poset{P}$ and $t \in T$, then $T_t \in \poset{P}$ too. The ordering is the inclusion $\subseteq$. The generic $\kappa$-real added by a tree-forcing $\poset{P}$ is $x_G:= \bigcup_{T \in G} \stem(T)$.

Along the paper we are going to introduce several types of tree-forcings: Sacks, Silver, Miller, Cohen, Laver and Mathias. 
We remark that some popular forcings can be seen as tree-forcings, even if this might not be evident \emph{a priori}. For instance:
\begin{itemize}
\item $\kappa$-Cohen forcing $\cohen:= ( 2^{<\kappa}, \supseteq )$ can be seen as a tree-forcing by associating $s \in 2^{<\kappa}$ with the tree $T_s:=\{ t \in 2^{<\kappa}: t \supseteq s \}$. Also we will often write $[s]:= \{ x \in 2^\kappa: x \supset s \}$ instead of $[T_s]$.
\item $\kappa$-Mathias forcing 
\[
\mathias:=\{ (s,x): s \in 2^{<\kappa}, x \in 2^\kappa, s \subset x \land x \restric |s| = s  \},
\]
and we define $(t,y) \leq (s,x)$ if and only if 
\[
s \subseteq t \land \forall i (|s|\leq i<|t| \Rightarrow t(i) \leq  x(i)) \land \forall i<\kappa(y(i)\leq x(i)).
\] 

Again, one can associate $(s,x) \in \mathias$ with a tree $T_{(s,x)}$ as follows: 
\[
T_{(s,x)}:= \{ t \in 2^{<\kappa}: t \subseteq s \vee  \big( t \supseteq s \land \forall i < |t| (t(i) \leq  x(i)) \big)  \}.
\]
\end{itemize}
Each section is indeed devoted to focus on a particular kind of trees. The specific definitions are given at the beginning of the corresponding section.

\begin{definition}(Tree-ideals and tree-measurability) \label{def:meager} Let $\poset{P}$ be a tree-forcing and let $X$ be a set of $\kappa$-reals. We define:
\begin{itemize}
\item $X$ is \emph{$\poset{P}$-open dense} iff $\forall T \in \poset{P} \exists T' \leq  T ([T']\subseteq X)$. The complement of a $\poset{P}$-open dense set is called \emph{$\poset{P}$-nowhere dense}. $X$ is \emph{$\poset{P}$-meager} iff it can be covered by a $\leq \kappa$-size union of $\poset{P}$-nowhere dense sets. The ideal of $\poset{P}$-meager sets is denoted by $\ideal{I}_\poset{P}$. (The complement of a $\poset{P}$-meager set is called $\poset{P}$-comeager.)  
\item $X$ is $\poset{P}$-measurable iff for every $T \in \poset{P}$ there is $T' \leq T$, such that
\[
[T'] \cap X \in \ideal{I}_\poset{P} \text{ or } [T'] \setminus X \in \ideal{I}_\poset{P}.
\]
\end{itemize}
\end{definition}

\begin{definition} \label{def:amoeba}
\begin{itemize}
\item Let $\poset{P}$ be a tree-forcing. We say that $T \in \poset{P}$ is an \emph{absolute $\poset{P}$-generic tree over $V$} if for every forcing extension $N \supseteq V$ via a $<\kappa$-closed poset 
\[
N \models T \in \poset{P} \land \forall x \in [T](x \text{ is $\poset{P}$-generic over } V). 
\] 
\item We say that $\poset{AP}$ is an \emph{amoeba forcing for $\poset{P}$} whenever $\poset{AP}$ adds an absolute generic tree $T \in \poset{P}$ over $V$.
\end{itemize} 
\end{definition}

\begin{definition}($\kappa$-Axiom A)
Let $\poset{P}$ be a forcing notion. We say that $\poset{P}$ satisfies $\kappa$-Axiom A iff there is a sequence $\{\leq_\alpha: \alpha<\kappa  \}$ of partial orders satisfying the following properties:
\begin{enumerate}
\item $\leq_0 = \leq$, and for every $\alpha<\beta$, $\leq_\beta \subseteq \leq_\alpha$;
\item if $\{ p_\alpha:\alpha<\kappa \} \subseteq \poset{P}$ is such that for every $\alpha<\beta$, $p_\beta \leq_\alpha p_\alpha$, then there is $q \in \poset{P}$ such that for all $\alpha < \kappa$, $q \leq_\alpha p_\alpha$ (such $q$ is called \emph{fusion});
\item if $A \subseteq \poset{P}$ is an antichain, $p \in \poset{P}$ and $\alpha < \kappa$, then there is $q \leq_\alpha p$ such that $\{ p' \in A: p' \text{ and } q \text{ are compatible}  \}$ has size $\leq \kappa$. 
\end{enumerate}
\end{definition}

\begin{definition}
Let $\poset{P}$ be a forcing satisfying $\kappa$-Axiom A and $\{ \leq_\alpha: \alpha < \kappa\}$ be the corresponding sequence of orders. We say that $\poset{P}$ satisfies \emph{pure decision} iff for every formula $\varphi$, every $p \in \poset{P}$ there exists $q \leq_0 p$ deciding $\varphi$, i.e., either $q \force \varphi$ or $q \force \neg \varphi$.
\end{definition}

\section{$\kappa$-Sacks trees}

\begin{definition}
A tree $T \subseteq 2^{<\kappa}$ is called \emph{club $\kappa$-Sacks} ($T \in \sacks^\club$) iff $T$ is perfect and for every $x \in [T]$, $\{ \alpha<\kappa: x\restric \alpha \in \splitting(T)  \}$ is closed unbounded (briefly called ``club", from now on).
\end{definition}

This forcing was introduced by Kanamori (\cite{Kan80}) as a suitable generalization of Sacks forcing in order to obtain $<\kappa$-closure and preservation of $\kappa^+$ under $\leq \kappa$-support iteration. In more recent years, $\sacks^\club$ has been investigated by several authors, such as Friedman and Zdomskyy (\cite{FZ07}) and Friedman, Khomskii and Kulikov (\cite{FKK14}).
 
\begin{definition} \label{def:amoeba2}
$(p,T) \in \asacks^\club$ iff  the following holds:
\begin{itemize}
\item $T \in \sacks^\club$;
\item $p \subset T$ is a $<\kappa$-closed subtree of size $<\kappa$ such that if $\{ t_i: i < \delta \}$ is a $<\kappa$-sequence of splitnodes of $p$ then $\lim_{i<\delta} t_i \in \splitting(p)$ too;
\item $\term(p) \subseteq \splitting_\alpha(T)$, for some $\alpha < \kappa$. 
\end{itemize}
The order is given by:
\[
(p',T') \leq (p,T) \ifif T' \subseteq T \land p' \supseteq p. 
\]
\end{definition}
If $G$ is $\asacks^\club$-generic over $V$, put $T_G:= \bigcup \{ p: \exists T (p,T) \in G \}$.

The following result shows that $\asacks^\club$ is really an amoeba forcing for $\sacks^\club$ with respect to Definition \ref{def:amoeba}. The proof is similar to the one of \cite[Lemma 12]{Lag14}.
\begin{proposition}
Let $G$ be $\asacks^\club$-generic over $V$. Then $T_G$ is an absolute $\sacks^\club$-generic tree over $V$.
\end{proposition}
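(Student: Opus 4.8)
The plan is to verify the two clauses in Definition \ref{def:amoeba} separately: first that $T_G \in \sacks^\club$, and second that every branch of $T_G$ is $\sacks^\club$-generic over $V$; and in both cases to argue that the property persists in any $<\kappa$-closed extension $N \supseteq V[G]$. The guiding principle is that a $<\kappa$-closed forcing adds no new nodes (these have length $<\kappa$), so $T_G$ is literally the same tree in $N$; hence both ``$T_G$ is perfect'' and ``limits of splitnodes of $T_G$ are splitnodes'' are structural facts about the fixed set of nodes and are preserved, while the witnessing conditions used below all lie in $V[G]$. The only thing $N$ may add is new branches $x \in [T_G]$, and the whole point of the argument is to make each relevant property depend only on the node structure of $T_G$ together with $<\kappa$-many conditions in $G$, so that new branches are handled for free.

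For the first clause I would isolate the relevant genericity. I would first observe that whenever $(p,T) \in G$, the terminal nodes $\term(p)$ form a \emph{bar} of $T_G$: since every condition below $(p,T)$ has second coordinate $\subseteq T$ and first coordinate $\supseteq p$, and since $p$ is the downward closure of $\term(p)$, all nodes added to $T_G$ above the height of $p$ lie above $\term(p)$; hence every $x \in [T_G]$ meets $\term(p)$ in exactly one node. Perfectness and cofinal splitting of $T_G$ come from the density, for each $\alpha<\kappa$, of $D_\alpha := \{ (p,T) : \term(p) \subseteq \splitting_\beta(T) \text{ for some } \beta \geq \alpha \}$: given $(p,T)$ one extends each terminal node to a pair of $T$-successors and continues in $T$ up to a common split-order-type $\beta \geq \alpha$ (available because $|\term(p)|<\kappa$, $\kappa$ is regular and $T \in \sacks^\club$), thereby growing $p$. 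Closedness of the splitting set along each branch is exactly the built-in requirement that a $<\kappa$-sequence of splitnodes of $p$ has its limit in $\splitting(p)$; to transfer this from individual $p$'s to $T_G$ one needs the $<\kappa$ relevant splitnodes to lie in a single $p$, which follows from $G$ being $<\kappa$-directed (a consequence of the closure properties of $\asacks^\club$, using that a generic filter on a sufficiently closed poset is $<\kappa$-directed).

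For the second clause, fix in $V$ a $\sacks^\club$-open dense set $D$, and set $E_D := \{ (p,T) : \forall s \in \term(p) \, (T_s \in D) \}$. The heart of the proof is that $E_D$ is dense. Given $(p,T)$, for each $s \in \term(p)$ and each $i \in \successor(s,T)$ I would choose, by density of $D$, a tree $S \leq T_{s^\conc i}$ with $S \in D$, declare its $\stem(S)$ a new terminal node, equalize all the resulting stems to a common split-order-type $\beta$ (again using regularity of $\kappa$), and glue the chosen trees into a single $T' \leq T$ whose restriction above each new terminal node is the corresponding $S \in D$. This $T'$ is club $\kappa$-Sacks because it is assembled from members of $\sacks^\club$ above a bar that is itself a uniform front of splitnodes, and the enlarged $p'$ is a legitimate first coordinate with $\term(p') \subseteq \splitting_\beta(T')$. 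Now if $(p,T) \in G \cap E_D$, then by the bar property every $x \in [T_G]$ passes through a unique $s \in \term(p)$, and above $s$ the tree $T_G$ is contained in $T_s \in D$; hence $x \in [T_s]$ with $T_s \in D$, so the filter $\{ S \in \sacks^\club : x \in [S] \}$ meets $D$. As $D$ ranges over all open dense sets of $V$ this shows $x$ is $\sacks^\club$-generic over $V$; and since the witnesses $(p,T)$ live in $V[G]$ while the bar property is structural, the same conclusion holds verbatim for any new branch $x$ appearing in a $<\kappa$-closed extension $N$, giving absoluteness.

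The main obstacle is the gluing step in the density of $E_D$: one must produce a single tree in $\sacks^\club$ out of $<\kappa$ many trees of $D$ sitting above the terminal nodes, while (i) keeping the chosen successors as genuine splitnodes, (ii) realigning the various stems to a common split-order-type so that $\term(p')$ is contained in a single $\splitting_\beta(T')$, and (iii) checking that the assembled tree still has club splitting on every branch. The bookkeeping here, together with the verification that $\asacks^\club$ is closed enough for $G$ to be $<\kappa$-directed (needed for the closure half of the first clause), is the only genuinely technical part; the rest is a routine adaptation of the argument of \cite[Lemma 12]{Lag14} to the $<\kappa$-closed setting.
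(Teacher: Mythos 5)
Your two-clause decomposition (first $T_G \in \clubsacks$, then genericity of every branch, with absoluteness obtained by reducing everything to node-level facts plus ground-model witnesses) is the same as the paper's, and your way of handling the passage to $N$ is actually more explicit than the paper's appeal to $\Sigma^1_2(\kappa^\kappa)$-absoluteness. However, there is a genuine gap at the heart of your second clause: the bar property you build everything on is false for the ordering under which the proposition is stated. The proposition concerns $\asacks^\club$ with the ordering of Definition \ref{def:amoeba2}, namely $(p',T') \leq (p,T)$ iff $T' \subseteq T$ and $p' \supseteq p$; the end-extension ordering is introduced only \emph{after} this proposition (for the Axiom A results, with $\kappa$ inaccessible). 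With mere $p' \supseteq p$, a stronger condition may add to the frozen part nodes branching off from the \emph{interior} of $p$: if $s \in p$ is non-terminal with $s^\conc 1 \in T \setminus p$, one can adjoin to $p$ the whole segment from $s^\conc 1$ up to a splitnode $u$ of $T$ whose splitting order type in $T$ equals the order type $\alpha$ with $\term(p) \subseteq \splitting_\alpha(T)$ (such $u$ exists because all sufficiently large order types are realized above $s^\conc 1$, and the minimal one realized there is $\leq \alpha$); then $\term(p') = \term(p) \cup \{u\} \subseteq \splitting_\alpha(T)$, the closure conditions on $p'$ are easily checked, and $(p',T) \leq (p,T)$. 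So nodes, and hence branches, of $T_G$ need \emph{not} pass through $\term(p)$, your set $E_D$ defined from $\term(p)$ does not catch all branches, and the step ``every $x \in [T_G]$ passes through a unique $s \in \term(p)$, above which $T_G$ is contained in $T_s \in D$'' breaks down. The paper avoids exactly this by using the boundary $b(p)$ instead of $\term(p)$: since $p$ has height $<\kappa$, every branch of $T_G$ (in $V[G]$ or in any outer model) must leave $p$, and its first node outside $p$ lies in $b(p)$; accordingly the correct dense set is $E_D = \{(p,T): \forall t \in b(p)\,(T_t \in D)\}$, and this bar property is structural and ordering-independent, which is what makes the rest of your plan (including the absoluteness step) go through.

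Two secondary points. You assert without argument that $H_x = \{S \in \clubsacks : x \in [S]\}$ is a filter; this is not automatic, because the intersection of two club $\kappa$-Sacks trees through $x$ need not be club $\kappa$-Sacks, and the paper devotes a separate genericity argument to it (for each pair $T,T' \in \clubsacks \cap V$ one meets a dense set which either separates from $[T \cap T']$ or yields a common refinement in $H_x$). Also, your appeal to ``$G$ is $<\kappa$-directed because $\asacks^\club$ is sufficiently closed'' is shaky: with either ordering, $\asacks^\club$ is not $<\kappa$-closed as a poset (keep $p$ fixed and let the trees $T_i$ decrease so that $\bigcap_i T_i$ is no longer perfect above $\term(p)$; such a descending sequence has no lower bound), so directedness of $G$ would itself need a proof; the paper's one-line treatment of $T_G \in \clubsacks$ quietly sidesteps this, and a careful argument should use density of conditions freezing $T_G$ up to any given level rather than closure of the poset. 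Both of these are repairable, but the $\term(p)$-versus-$b(p)$ issue is the one that makes your argument fail as written.
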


\begin{proof}
$T_G \in \clubsacks$ is clear, since any approximation $p \subseteq T_G$ has the property that limits of splitting nodes are splitting. 

We first check that \[ V[G] \models \forall x \in [T_G] (x \text{ is $\clubsacks$-generic over $V$}) \]
Fix $D \subseteq \clubsacks$ dense and put $E_D:=\{ (p,T): \forall t \in b(p) (T_t \in D) \}$. We claim $E_D$ is dense. In fact, fix $(p,T) \in \asacks^\club$, and for every $t \in b(p)$, pick $S_t \subseteq T_t$ such that $S_t \in D$. Then put $S:= \bigcup_{t \in b(p)} S_t$. Clearly $(p,S) \in E_D$. Hence, for every $D \subseteq \clubsacks$ open dense in $V$, 
$V[G] \models \forall x \in [T_G] (H_x \cap D \neq \emptyset),$
where $H_x:= \{ T \in \sacks^\club: x \in [T] \}$. Note that $H_x$ is a filter. If not, there are $T,T' \in H_x$, such that $T \cap T'$ is not in $\sacks^\club$. Let $A = T \cap T'$; then $D_A:= \{ S \in \clubsacks: [S] \cap [A] =\emptyset\}$ is dense, as for every $S \in \sacks^\club$ we can find $t \in S \notin A$. But then $H_x \cap D_A \neq \emptyset$, contradicting the fact that $x \in [T] \cap [T']=[A]$.  Note that what we have proven is 
\[
V[G] \models \varphi : \equiv \exists F \subseteq 2^{<\kappa} \forall x \in 2^\kappa( x \in [T_G] \Rightarrow \exists t \in F (t \subset x \land (T_G)_t \in D)).
\]
But $\varphi$ is a $\Sigma^1_2(\kappa^\kappa)$ statement and so for every $<\kappa$-closed forcing extension $N \supseteq V[G]$, $N \models \varphi$. Since $D \subseteq \sacks^\club$ is arbitrary, we are done.
\end{proof}

We now assume $\kappa$ be inaccessible and we consider the ordering: 
\[ 
(p',T') \leq (p,T) \ifif T' \subseteq T \land p' \text{ end-extends } p.
\]
Now we aim at showing the following. 

\begin{proposition} \label{axiomA}
Let $\kappa$ be inaccessible. $\asacks^\club$ satisfies $\kappa$-Axiom A.
\end{proposition}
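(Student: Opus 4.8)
The plan is to exhibit an explicit sequence of fusion orderings $\{\leq_\alpha:\alpha<\kappa\}$ and check the three clauses of $\kappa$-Axiom A in turn. For $(p',T')\leq(p,T)$ I would declare $(p',T')\leq_\alpha(p,T)$ iff, in addition, $\splitting_\beta(T')=\splitting_\beta(T)$ for every $\beta\leq\alpha$; that is, thinning the condition is forbidden to disturb any splitnode of $T$ of order type $\leq\alpha$. Clause (1) is then immediate: $\leq_0$ is just $\leq$ (no splitting level is frozen), and $\leq_\beta\subseteq\leq_\alpha$ whenever $\alpha<\beta$, since freezing more splitting levels is a stronger demand. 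Note that because $p'$ end-extends $p$, the two side conditions already agree below $\term(p)$, so the freezing clause genuinely constrains only how $T'$ is pruned above the splitting order type of the leaves of $p$.

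For the fusion clause (2), given a sequence $(p_\alpha,T_\alpha)_{\alpha<\kappa}$ with $(p_\beta,T_\beta)\leq_\alpha(p_\alpha,T_\alpha)$ for all $\alpha<\beta$, I would put $T^*:=\bigcap_{\alpha<\kappa}T_\alpha$ and $p^*:=cl\big(\bigcup_{\alpha<\kappa}p_\alpha\big)$ and claim $q:=(p^*,T^*)$ is the required lower bound. The freezing guarantees that for each fixed $\beta$ the splitnodes of order type $\leq\beta$ are identical in every $T_\alpha$ with $\alpha\geq\beta$, hence $\splitting_\beta(T^*)=\splitting_\beta(T_\beta)$; granting that $q$ is a condition, this is exactly what yields $q\leq_\beta(p_\beta,T_\beta)$ for every $\beta$.

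The main obstacle is precisely to show $T^*\in\clubsacks$, i.e.\ that the $\kappa$-length intersection is still perfect with club splitting. Perfectness is the easy half: any $s\in T^*$ of order type $<\beta$ is extended by a splitnode of $T_\beta$ that survives the freezing into $T^*$. The delicate half is club-ness: along a branch $x\in[T^*]$ I would argue that the set of $\gamma$ with $x\restric\gamma\in\splitting(T^*)$ is closed and unbounded, the unboundedness coming from the frozen splitnodes along $x$ and the closure coming from the $<\kappa$-closure of the trees, which forces a $\subseteq$-increasing sequence of splitnodes to have a splitnode as its limit. This is where the move from $\omega$ to an uncountable $\kappa$ really bites, since at each limit ordinal $\lambda<\kappa$ one must ensure that $\bigcup_{\alpha<\lambda}p_\alpha$ already carries its limit nodes as splitnodes (this is exactly the limit-of-splitnodes clause built into the definition of $\asacks^\club$) and that no branch of $T^*$ is accidentally isolated. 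Here inaccessibility enters in checking that $p^*$ is a legitimate side condition: each order type $<\beta$ contributes at most $2^{|\beta|}<\kappa$ splitnodes, so every proper initial segment of $p^*$ has size $<\kappa$; the one point requiring care is that for a fusion sequence whose side conditions grow cofinally the final $p^*$ may swell to size $\kappa$ (a \emph{complete} approximation with $p^*=T^*$), and this limiting case must be accommodated rather than excluded.

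Finally, for the antichain clause (3), given an antichain $A$, a condition $(p,T)$ and $\alpha<\kappa$, I would freeze the splitting of $T$ up to order type $\alpha$ and then prune $T$ below each of its surviving splitnodes of order type $\leq\alpha$ so as to decide $A$, thinning below each so that it is compatible with at most one member of $A$. Since $\kappa$ is inaccessible there are only $2^{|\alpha|}<\kappa$ such nodes, and a member of $A$ compatible with the resulting $q\leq_\alpha(p,T)$ must realise that compatibility through one of these $<\kappa$ frozen nodes together with a side condition of size $<\kappa$, of which there are only $\kappa=\kappa^{<\kappa}$ in all; hence at most $\kappa$ elements of $A$ remain compatible with $q$. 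I expect clause (3) to be essentially routine once the fusion analysis of clause (2) is in place, and I regard the preservation of club splitting under the $\kappa$-length intersection as the genuine difficulty of the whole argument.
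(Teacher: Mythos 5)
Your choice of fusion orderings breaks down at clause (2), and this is the decisive difference from the paper. You let $(p',T')\leq_\alpha(p,T)$ require only that the splitting structure of the big tree be frozen, while the side condition is still allowed to grow ($p'$ end-extending $p$). Along a fusion sequence $(p_\alpha,T_\alpha)_{\alpha<\kappa}$ the side conditions may then grow cofinally, so that $\bigcup_{\alpha<\kappa}p_\alpha$ has size $\kappa$; any lower bound $(p^*,T^*)$, even just with respect to $\leq$, must have $p^*\supseteq\bigcup_{\alpha<\kappa}p_\alpha$, whereas Definition \ref{def:amoeba2} demands that side conditions have size $<\kappa$. So with your orderings there simply is no fusion, and your suggestion to ``accommodate'' complete approximations $p^*=T^*$ is not a repair but a change of the forcing notion: such pairs are not elements of $\asacks^\club$, so you would be proving Axiom A for a different poset. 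The paper avoids the problem at the root: its orderings are $(p',T')\leq_\alpha(p,T)$ iff $p'=p$ and $T'\leq_\alpha T$, i.e., the side condition is frozen outright. Then every condition in a fusion sequence carries the same $p$, the fusion is $(p,\bigcap_{\alpha<\kappa}T_\alpha)$, and the club-splitting argument you sketch for the intersection goes through.

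There is a second gap in your clause (3). You prune $T$ below each of the $<\kappa$ many frozen splitnodes ``so that it is compatible with at most one member of $A$'' and then count side conditions, $\kappa^{<\kappa}=\kappa$ many. But compatibility of some $a\in A$ with the resulting $q$ is witnessed by a common extension whose side part is an arbitrary small subtree of the pruned tree; knowing the frozen node and the side part does not by itself pin down $a$, so the injectivity behind your count is unsupported. What makes this work in the paper is Lemma \ref{quasi-pure} (quasi pure decision): the big tree is first thinned by a $\kappa$-fusion that at stage $\alpha$ runs over all terminal subtrees of $T_\alpha[\alpha+1]$ --- this is where inaccessibility enters, via $\delta_\alpha\leq 2^{2^\alpha}<\kappa$ --- so that for every small subtree $q'$ of the resulting $T'$, if \emph{any} extension with side part $q'$ lies in the open dense set $D_A$, then already $(q',T'{\downarrow}q')\in D_A$. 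Since $(q',T'{\downarrow}q')$ can extend at most one member of the antichain, the map from members of $A$ compatible with $(p,T')$ to side conditions is injective, giving the bound $\kappa$. Without this quasi-pure-decision step (or an equivalent), your clause (3) does not close; with it, and with the corrected orderings that freeze $p$, your outline becomes essentially the paper's proof.
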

The sequence of orders $\{ \leq_\alpha: \alpha<\kappa \}$ is defined as follows: 

for every $(p,T),(p',T') \in \poset{AS}^\club_\kappa$,
\[
(p',T') \leq_\alpha (p,T) \text{ iff } p'=p \land T' \leq_\alpha T,
\]
where $T' \leq_\alpha T :\ifif T' \leq T \land \splitting_\alpha(T')=\splitting_\alpha(T)$.
Proposition \ref{axiomA} follows from the following two lemmata.

\begin{lemma} \label{quasi-pure}
$\asacks^\club$ satisfies quasi pure decision, i.e., given $D \subseteq \asacks^\club$ open dense and $(p,T) \in \asacks^\club$ there is $T' \in \clubsacks$ such that $T' \leq T$, $(p,T') \in \asacks^\club$ and 
\[
\forall (q, S) \leq (p,T') ((q,S) \in D \Rightarrow (q, T' {\downarrow} q) \in D).
\] 
\end{lemma}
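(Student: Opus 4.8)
The plan is to build $T'$ by a fusion of length $\kappa$ along the orders $\{\leq_\alpha : \alpha<\kappa\}$, during which we successively \emph{seal} every approximation end-extending $p$. The sealing mechanism is as follows: given the current tree $T_\xi$ and an approximation $q$ end-extending $p$, if there is any $S \leq T_\xi {\downarrow} q$ with $(q,S)\in D$, we replace the part of $T_\xi$ lying above the terminal nodes of $q$ by such an $S$, i.e.\ we pass to $T_{\xi+1}$ with $T_{\xi+1}{\downarrow} q = S$ and $T_{\xi+1}$ unchanged elsewhere; otherwise we leave $T_\xi$ untouched. Since $D$ is open (downward closed), once $(q,S)\in D$ has been secured, any later shrinking $T'\leq T_{\xi+1}$ still gives $(q,T'{\downarrow} q) \leq (q,S)$, hence $(q,T'{\downarrow} q)\in D$: a seal, once placed, survives the rest of the construction, which is precisely the conclusion of the lemma.

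The bookkeeping relies on inaccessibility. In a club $\kappa$-Sacks tree each splitnode of order type $\alpha$ is determined by a binary choice at each of its $\alpha$ splitting predecessors, so $|\splitting_\alpha(T)| \leq 2^{|\alpha|} < \kappa$; consequently the set of approximations $q$ end-extending $p$ with $\term(q) \subseteq \splitting_\alpha(T)$ also has size $<\kappa$. Summing over levels and using $\kappa^{<\kappa}=\kappa$, there are exactly $\kappa$ approximations extending $p$, and they can be listed as $\langle q_\xi : \xi<\kappa\rangle$ in non-decreasing order of level, where $\alpha_\xi$ denotes the common order type of the terminal splitnodes of $q_\xi$; thus $\alpha_\xi$ is non-decreasing and cofinal in $\kappa$. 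We set $T_0 = T$ and, at step $\xi$, obtain $T_{\xi+1}$ from $T_\xi$ by sealing $q_\xi$, taking intersections at limits. Because sealing $q_\xi$ thins $T_\xi$ only strictly above its terminal nodes, which sit at splitting order type $\alpha_\xi$, it removes no splitnode of order type $\leq\alpha_\xi$ and alters no order type below $\alpha_\xi$; hence $\splitting_{\alpha_\xi}(T_{\xi+1})=\splitting_{\alpha_\xi}(T_\xi)$, that is $T_{\xi+1}\leq_{\alpha_\xi} T_\xi$ (and so $\leq_\gamma$ for all $\gamma\leq\alpha_\xi$, by the nesting of the orders).

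Since the levels $\alpha_\xi$ are non-decreasing with supremum $\kappa$, for each fixed $\gamma$ the sequence is $\leq_\gamma$-decreasing from the moment level $\gamma$ is reached; the fusion property for $\{\leq_\alpha : \alpha<\kappa\}$ (the remaining ingredient of Proposition \ref{axiomA}) then yields $T':= \bigcap_{\xi<\kappa} T_\xi \in \clubsacks$ with $T'\leq T$. As every $q_\xi$ end-extends $p$, no step ever disturbs levels $\leq\alpha_p$, so $\term(p)\subseteq\splitting_{\alpha_p}(T')$ and $(p,T')\in\asacks^\club$. For the conclusion, take any $(q,S)\leq(p,T')$ with $(q,S)\in D$ and pick $\xi$ with $q=q_\xi$. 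Here one uses that the tree part of any refinement with stem $q$ is contained in the canonical amalgamation $T'{\downarrow} q$, so that $S\leq T_\xi{\downarrow} q$; hence at step $\xi$ the existence clause held and some $\bar S$ with $(q,\bar S)\in D$ was installed as $T_{\xi+1}{\downarrow} q$. Finally $T'{\downarrow} q\subseteq T_{\xi+1}{\downarrow} q=\bar S$ since $T'\leq T_{\xi+1}$, whence $(q,T'{\downarrow} q)\leq(q,\bar S)\in D$ and openness gives $(q,T'{\downarrow} q)\in D$.

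The main obstacle is the interaction between distinct approximations of the \emph{same} level that share a terminal node: sealing one re-thins a cone above a node also used by another, threatening to undo an earlier seal. This is exactly where openness of $D$ is indispensable, since a seal reduces to an inclusion $T'{\downarrow} q\subseteq\bar S$ that is preserved by every subsequent shrinking, so no seal is ever lost. The second delicate point is keeping the limit club $\kappa$-Sacks: because a single seal thins across \emph{all} levels above $\alpha_\xi$ rather than at one fixed level, one cannot run a level-by-level fusion at a fixed $\leq_\alpha$, and must instead organise the whole process as one $\kappa$-fusion whose preserved level increases to $\kappa$, invoking the fusion property at limits. Inaccessibility enters essentially at the counting step, guaranteeing $<\kappa$ approximations per level so that the enumeration has length exactly $\kappa$.
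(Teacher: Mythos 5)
Your sealing mechanism (graft some $S$ with $(q,S)\in D$ above $\term(q)$ if one exists, otherwise skip, and let openness of $D$ make seals permanent) is exactly the paper's, as is the use of inaccessibility for counting and of fusion at limits. But there is a genuine gap, and it sits precisely where you depart from the paper: you fix the enumeration $\langle q_\xi:\xi<\kappa\rangle$ \emph{in advance}, grouping approximations by the order type of their terminal nodes computed in the \emph{original} tree $T$. Once a single seal has been placed, the tree grafted above $\term(q_\xi)$ is an arbitrary $\bar S\in\clubsacks$, and the splitnodes of $\bar S$ occupy scattered order types in $T$: a set of nodes lying at one splitting level of the current tree need not lie in $\splitting_\alpha(T)$ for any single $\alpha$. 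This breaks your argument twice. First, in the verification: for $(q,S)\leq(p,T')$ with $(q,S)\in D$, the definition of $\asacks^\club$ only gives $\term(q)\subseteq\splitting_\gamma(S)$ for some $\gamma$ computed in $S\subseteq T'$; since these terminal nodes are splitnodes of the grafted trees, in general there is \emph{no} $\xi$ with $q=q_\xi$ (already after the first seal, any $q$ with two terminal nodes at a common level of $T_1$ but at different levels of $T$ is invisible to your list), so no seal was ever placed for $q$ and the conclusion $(q,T'{\downarrow}q)\in D$ has no support. Second, in the bookkeeping: your claim that sealing $q_\xi$ ``removes no splitnode of order type $\leq\alpha_\xi$'' uses that $\term(q_\xi)$ sits at splitting level $\alpha_\xi$ \emph{of $T_\xi$}, but it sits there only in $T$; after earlier seals those nodes may be splitnodes of $T_\xi$ of much smaller order type (or not splitnodes of $T_\xi$ at all), so a seal can destroy splitnodes of $T_\xi$ of order type $\leq\alpha_\xi$, the relation $T_{\xi+1}\leq_{\alpha_\xi}T_\xi$ fails, and the fusion argument at limits collapses.

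The paper avoids both problems by making the enumeration \emph{dynamic}: at stage $\alpha+1$ it lists the $\delta_\alpha\leq 2^{2^\alpha}<\kappa$ terminal subtrees of $T_\alpha[\alpha+1]$, i.e.\ approximations whose terminal nodes are level-$(\alpha+1)$ splitnodes of the \emph{current} tree $T_\alpha$, and runs an inner recursion of length $\delta_\alpha$ sealing each of them (via $T\ltimes^{q}S$) before moving to the next level. The fusion then freezes $\splitting_{\alpha+1}(T_\alpha)$ into all later trees and into $T'$, so an approximation occurring below $(p,T')$ at a splitting level of $T'$ is literally one of the enumerated $p^i_\alpha$, and the seal it needs was placed at stage $(\alpha,i)$. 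Your flat $\kappa$-list with levels read off from $T$ cannot be repaired without reintroducing this stage-by-stage recomputation, i.e.\ without turning the construction into the paper's two-level one (outer fusion over levels, inner recursion over terminal subtrees of $T_\alpha[\alpha+1]$).
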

\begin{proof}
To simplify the notation, we give a proof for $p=\emptyset$ and leave the general case to the reader. We use the following notation: given a $<\kappa$-closed tree $q'$ of size $<\kappa$ we say that $q \subseteq q'$ is a \emph{terminal subtree} iff $\forall t \in \term(q)$, $t \in \term(q')$ too.

In the following construction, we use the following notation: 
\begin{itemize}
\item[-] $T[\beta]$ denotes the tree generated by $\splitting_{\beta}(T)$. i.e., 
\[
T[\beta]:= \{ t: \exists t' \in \splitting_\beta(T) (t \subseteq t')  \}.
\] 
\item[-]For a tree $T$, $q$ $<\kappa$-size $<\kappa$-closed subtree of $T$, and $S \leq T$ such that $S$ end-extends $q$, put
\[
T \ltimes^q S :=  \{ t \in T: (\exists t_0 \in \term(q) (t_0 \not \perp t)) \Rightarrow t \in S   \}.
\]
\end{itemize}
We build a fusion sequence $\{ T_\alpha: \alpha < \kappa \}$ by induction as follows (with $T_0=T$):
\begin{itemize}
\item[Step $\alpha+1$.] Let $\{p_\alpha^i: i < \delta_\alpha  \}$ enumerate all terminal subtrees of $T_\alpha[\alpha+1]$. Note that $\delta_\alpha \leq 2^{2^\alpha} < \kappa$, since $\kappa$ is inaccessible. 
Then we proceed by induction on $i < \delta_\alpha$ as follows:
\begin{itemize}
\item[$i=0$.] If $\exists S_\alpha^0 \leq T_\alpha$ so that $(p_\alpha^0, S^0_\alpha) \in D$, then put $T_\alpha^0:= T_\alpha \ltimes^{p_\alpha^0} S^0_\alpha$; otherwise let $T^0_\alpha:= T_\alpha$.
\item[$i+1$.]  If $\exists S_\alpha^{i+1} \leq T_\alpha^{i}$ so that $(p_\alpha^{i+1}, S^{i+1}_\alpha) \in D$, then put 
\[
T_\alpha^{i+1}:= T_\alpha^{i} \ltimes^{p_\alpha^{i+1}} S^{i+1}_\alpha;
\]
otherwise let $T^{i+1}_\alpha:= T^{i}_\alpha$.
\item[$i$ limit.] Put $T^i_\alpha:= \bigcap_{j < i} T_\alpha^j$. 
\end{itemize}
Then put $T_{\alpha+1}:= \bigcap_{i < \delta_\alpha} T_\alpha^i$. Note that $T_{\alpha+1} \leq_{\alpha} T_\alpha$.
\item[Step $\alpha$ limit.] Put $T_{\alpha}:= \bigcap_{\xi<\alpha} T_\xi$. Note that for all $\xi<\alpha$, $T_\alpha \leq_\xi T_\xi$.
\end{itemize}
Finally put $T':= \bigcap_{\alpha<\kappa} T_\alpha$. We claim that $T'$ as the required property. Indeed pick any $(q, S) \leq (\emptyset, T')$ such that $(q, S) \in D$. Choose $\alpha < \kappa$ and $i < \delta_\alpha$ such that $q= p_\alpha^i$. Then the statement $\exists S_0 (p^i_\alpha,S_0) \in D$ is satisfied (with $S_0=S$), and so $(q, T'{\downarrow} q) \leq (q, T^i_\alpha{\downarrow}q) \leq (q,S_0) \in D$.

\end{proof}

\begin{lemma}
Let $A \subseteq \asacks^\club$ be a maximal antichain, $(p,T) \in \asacks^\club$ and $\alpha < \kappa$. Then there exists $T' \in \clubsacks$, $T' \leq T$ such that $(p,T') \leq_\alpha (p,T)$ and $(p,T')$ only has $\leq \kappa$ many elements compatible in $A$. 
\end{lemma}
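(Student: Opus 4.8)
The plan is to reduce the statement to the quasi pure decision of Lemma~\ref{quasi-pure}. Since $A$ is a maximal antichain, its downward closure $D := \{ (q,S) \in \asacks^\club : \exists (r,R) \in A\ ((q,S) \leq (r,R)) \}$ is open dense. I would then run the fusion of Lemma~\ref{quasi-pure} for $D$ and $(p,T)$, but with the bookkeeping shifted so that the first $\alpha$ splitting levels of $T$ are never touched (that is, set $T_\xi := T$ for $\xi \leq \alpha$ and perform the shrinking steps only at stages $>\alpha$). The output is a tree $T' \leq_\alpha T$ with $(p,T') \in \asacks^\club$ enjoying the quasi pure decision property for every small part above level $\alpha$: whenever $(q,S) \leq (p,T')$ lies in $D$ and $\term(q) \subseteq \splitting_\gamma(T')$ for some $\gamma > \alpha$, then $(q, T'{\downarrow} q) \in D$ as well. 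Since $p'=p$ and $\splitting_\alpha(T') = \splitting_\alpha(T)$, this gives $(p,T') \leq_\alpha (p,T)$, as required.

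The counting now rests on $\kappa^{<\kappa} = \kappa$. Let $P$ be the family of all $<\kappa$-closed subtrees $q \subseteq T'$ of size $<\kappa$ such that $(q, T'{\downarrow} q)$ is a condition below $(p,T')$; as $|T'| = \kappa$ there are at most $\kappa^{<\kappa} = \kappa$ of them, so $|P| \leq \kappa$. On $P_D := \{ q \in P : (q, T'{\downarrow} q) \in D \}$ define $f : P_D \to A$ by letting $f(q)$ be the member of $A$ with $(q, T'{\downarrow} q) \leq f(q)$; this is well defined and single-valued because $A$ is an antichain, so $(q, T'{\downarrow} q)$ lies below at most one element of $A$. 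Since $|\rng(f)| \leq |P_D| \leq \kappa$, it suffices to show that every $(r,R) \in A$ compatible with $(p,T')$ belongs to $\rng(f)$.

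To see this, fix such an $(r,R)$ and a common extension $(q_0,S) \leq (r,R),(p,T')$. Using that $D$ is downward closed together with the club splitting structure of $T'$, I first refine $q_0$ to a terminal subtree of $T'$ of a uniform order type $\gamma > \alpha$, keeping the condition below both $(r,R)$ and $(p,T')$ and inside $D$. The quasi pure decision property then yields $(q_0, T'{\downarrow} q_0) \in D$, so $q_0 \in P_D$. Moreover $S \subseteq T'{\downarrow} q_0$, because $S$ end-extends $q_0$ and hence every node of $S$ is compatible with a terminal node of $q_0$; thus $(q_0,S)$ is a common lower bound of $(q_0, T'{\downarrow} q_0)$ and $(r,R)$, witnessing their compatibility. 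As $(q_0, T'{\downarrow} q_0) \leq f(q_0) \in A$, the antichain property forces $f(q_0) = (r,R)$, whence $(r,R) \in \rng(f)$.

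The delicate point is the first step: combining the freezing of the first $\alpha$ splitting levels (needed for $T' \leq_\alpha T$) with the availability of quasi pure decision. Freezing those levels removes them from the fusion, so Lemma~\ref{quasi-pure} is only guaranteed for small parts whose terminal nodes sit above level $\alpha$; the counting must therefore always push the small part of a common extension to a terminal subtree of $T'$ of uniform order type $>\alpha$ before invoking the property. This is exactly what the openness of $D$ and the club structure of $T'$ permit, and it is the only genuinely non-routine ingredient. Everything else is cardinal bookkeeping resting on $\kappa^{<\kappa}=\kappa$ together with the uniqueness coming from $A$ being an antichain.
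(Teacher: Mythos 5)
Your overall strategy is the same as the paper's: pass to the open dense downward closure $D$ of $A$, arrange quasi pure decision for $D$ while only shrinking above the first $\alpha$ splitting levels (so that $\leq_\alpha$ is preserved), and then count using $\kappa^{<\kappa}=\kappa$ together with the fact that a condition can lie below at most one member of an antichain. The paper implements the first step slightly differently: instead of re-running the fusion with shifted bookkeeping, it lists the $\delta<\kappa$ many terminal subtrees $p^i$ of $T[\alpha+1]$ end-extending $p$, applies Lemma \ref{quasi-pure} as a black box to each $(p^i, T{\downarrow}p^i)$ with $D=D_A$, and glues the outputs $T^i$ together with the $\ltimes$ operation; this is an organizational difference, not a mathematical one. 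The paper then compresses the entire counting step into the single assertion that Lemma \ref{quasi-pure} bounds the number of elements of $A$ compatible with $(p^i,T^i)$; your map $f$ and the reduction of "uniform order type" small parts is exactly the argument that assertion needs, and your explicit flagging of the need to push the small part of a common extension up to a uniform splitting level of $T'$ addresses a point the paper silently glosses over.

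There is, however, one step that fails as written: the claim that $S \subseteq T'{\downarrow}q_0$ "because $S$ end-extends $q_0$". Nothing in Definition \ref{def:amoeba2} makes the big part of a condition end-extend the small part: $(q_0,S)\in\asacks^\club$ only requires $q_0 \subset S$ with $\term(q_0) \subseteq \splitting_\beta(S)$ for some $\beta$, and $S$ may split below the terminal nodes of $q_0$ at splitting nodes where $q_0$ does not branch. Any node of $S$ beyond such a splitting point is incomparable with every element of $\term(q_0)$, so in general $S \not\subseteq T'{\downarrow}q_0$ and $(q_0,S)$ is \emph{not} a lower bound of $(q_0, T'{\downarrow}q_0)$. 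The repair is to witness compatibility with $(q_0, S{\downarrow}q_0)$ instead: one has $S{\downarrow}q_0 \subseteq T'{\downarrow}q_0$ (as $S\subseteq T'$), $S{\downarrow}q_0 \subseteq S \subseteq R$, and $q_0$ end-extends $r$, so $(q_0,S{\downarrow}q_0)$ lies below both $(q_0,T'{\downarrow}q_0)$ and $(r,R)$ -- provided it is a condition, which requires the terminal nodes of $q_0$ to sit at a uniform splitting level of $S{\downarrow}q_0$. This is the same internal uniformity of $q_0$ that is already needed for $(q_0, T'{\downarrow}q_0)$ itself to be a condition, so your refinement step should be performed so as to secure it (e.g., extending along branches chosen so that both the $T'$-splitting order types and the number of splitting nodes of $q_0$ below the new terminal nodes come out uniform; a club-intersection argument over the $<\kappa$ many terminal nodes makes this possible). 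With that repair your argument goes through, at the same level of rigor as the paper's own sketch.
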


\begin{proof}
Let $A \subseteq \asacks^\club$ be a maximal antichain and $D_A:=\{ (p,T): \exists (q,S) \in A, (p,T) \leq (q,S) \}$ its associated open dense set. Given any $(p,T) \in \asacks^\club$, $\alpha < \kappa$, let $\{ p^i: i < \delta \}$ ($\delta < \kappa$) list all terminal subtrees of $T[\alpha+1]$ end-extending $p$ and apply Lemma \ref{quasi-pure} in order to find $T^i \subseteq T {\downarrow} p^i$, for $i < \delta$, satisfying quasi pure decision for $(p^i,T{\downarrow} p^i)$ with $D=D_A$. Then define $T'$ as the limit of the following recursive construction on $i < \delta$ (starting with $S_0=T$): successor case $i+1$: put $S_{i+1}:= S_i \ltimes^{p^i} T^i$; limit case $i$: $S_i:= \bigcap_{j < i} S_j$. Finally put
 $T' := \bigcap_{i<\delta} S_i$. We get $(p,T') \leq_\alpha (p,T)$ and 
\[
\{(q,S) \in A:  (q,S) \not \perp (p,T')  \} \subseteq \bigcup_{i < \delta} \{ (q,S) \in A: (q,S) \not \perp (p^i,T^i) \}.
\]
But for every $i < \delta$, Lemma \ref{quasi-pure} implies $\{ (q,S) \in A: (q,S) \not \perp (p^i,T^i) \}$ has size $\leq \kappa$, and so also the $\delta$-size union has size $\leq \kappa$.
\end{proof}

\begin{remark} \label{remark1}
Looking at the $\omega$-case, without particular care, amoeba forcings tend to add Cohen reals (and indeed they fail to have pure decision). For instance, the naive Sacks amoeba adds the following Cohen real $c \in \cantor$: let $T_G$ be the generic Sacks tree added by the amoeba, $z$ its leftmost branch, $\{t_n: n \in \omega\}$ the set of splitting nodes such that $z = \bigcup_{n \in \omega} t_n$. Define $c(n)$ to be 0 iff $\min \{|s|: t_n^\conc 0 \subsetneq s \land  s \in \splitting(T_G) \} \leq \min \{|s|: t_n^\conc 1 \subsetneq s \land  s \in \splitting(T_G) \}$. This construction straightforwardly generalizes to our context $\gcantor$. This kind of Cohen real can be suppressed by considering a finer version of Sacks amoeba, that not only kills this instance of Cohen real, but in the $\omega$-case actually turns out to have pure decision and the Laver property (see \cite{LShV93} and \cite{Lag14}). 
However, in the generalized framework we are considering, this kind of construction fails, since we do not have an appropriate partition property for perfect trees in $\clubsacks$. A symptom of this problem is revealed by the existence of another kind of Cohen $\kappa$-real, that seems to be more robust compared to the previous one.  To build this Cohen $\kappa$-real we fix a stationary and co-stationary subset $S \subseteq \kappa$ in the ground model. Let $\asacks^\club$ be an amoeba ordered by: 
$(p',T') \leq (p,T) \ifif T' \subseteq T \land p' \text{ end-extends } p$.
Then let $x \in [T_G]$ be the leftmost branch, where $T_G$ is the generic tree added by $\asacks^\club$, and let $\{t^x_\alpha: \alpha < \kappa  \}$ enumerate all splitting nodes that are initial segments of $x$. Then put $c(\alpha)=0$ iff $|t^x_{\alpha+1}| \in S$. It is easy to check that $c$ is a Cohen $\kappa$-real. Indeed let $(p,T) \in \asacks^\club$, $t \in p$ be the longest leftmost splitnode in $p$ and $z \in T$ be the leftmost branch. Then 
\[
C:= \{ |t'|: (t' \in \splitting(T) \land t' \supsetneq t  \land t' \subseteq z)  \}
\]  
is a club, per definition of $\asacks^\club$, and so there are both splitnodes with length in $C \cap S$ and splitnodes with length in $C \cap (\kappa\setminus S)$, as $S$ is both stationary and co-stationary. Hence we can freely select the first splitnode extending $t$ in oder to meet either $S$ or its complement, and this implies $c$ is Cohen. 
 \end{remark}

\subsection{Coding by stationary sets}
The main result in this section will be the following.
\begin{theorem} \label{sacks-amoeba-cohen}
Let $V \subseteq N$ be ZFC-models such that in $N$ there is an absolute $\clubsacks$-generic tree over $V$ and $N$ is a forcing extension of $V$ via a $<\kappa$-closed poset. Then there exists $z \in N \cap 2^\kappa$ that is Cohen over $V$. 
\end{theorem}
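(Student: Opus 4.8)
The plan is to extract a Cohen $\kappa$-real from the absolute generic tree $T$ by combining the two coding ideas already sketched in Remark \ref{remark1}, but now stated for an arbitrary absolute generic tree rather than for the tree produced by a specific amoeba. The key structural fact, stated in the definition of $\clubsacks$, is that along every branch $x \in [T]$ the set of split-levels $\{ \alpha < \kappa : x \restric \alpha \in \splitting(T) \}$ is club. I would first fix in the ground model $V$ a stationary, co-stationary set $S \subseteq \kappa$; since $N$ is a $<\kappa$-closed forcing extension of $V$, no new $<\kappa$-sequences are added, so $S$ remains stationary and co-stationary in $N$, and a club of $N$ that is coded in $V$ still decides membership in $S$ the same way.

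Next I would define the candidate real. Let $x \in [T]$ be the leftmost branch and let $\{ t^x_\alpha : \alpha < \kappa \}$ enumerate the splitting nodes of $T$ that are initial segments of $x$, in increasing order of length. Put $z(\alpha) = 0$ iff $|t^x_{\alpha+1}| \in S$, exactly as in the Remark. The claim is that $z \in N \cap 2^\kappa$ is Cohen over $V$, i.e.\ $z$ avoids every $\cohen$-nowhere dense set coded in $V$, equivalently $z$ meets every dense open $D \subseteq 2^{<\kappa}$ lying in $V$.

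The heart of the argument is a genericity-over-$V$ computation for $T$. Since $T$ is an \emph{absolute} $\clubsacks$-generic tree over $V$, every branch of $T$ is $\clubsacks$-generic over $V$; I would use this to transfer density facts about $\clubsacks$-conditions in $V$ into statements about how the split-levels of $T$ are distributed. Concretely, fix $s \in 2^{<\kappa}$ in $V$ and a dense open $D \subseteq 2^{<\kappa}$ in $V$; working inside $V$, for any $\clubsacks$-condition approximating an initial segment of a branch, the freedom guaranteed by the club-splitting definition lets me extend the stem through split-levels whose lengths fall alternately into $S$ and $\kappa \setminus S$ (both hit every club, as $S$ is stationary and co-stationary), thereby realizing any finite-length extension of $s$ and in particular entering $D$. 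The formal vehicle is to show that the set of conditions forcing ``the coded value $\check{z}$ end-extends $s$ and enters $D$'' is dense in $\clubsacks$ in $V$; genericity of the leftmost branch of $T$ then forces $\check z$ to meet $D$. Because $D$ was an arbitrary $V$-coded dense open subset of $2^{<\kappa}$, this yields that $z$ is Cohen over $V$.

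The main obstacle I anticipate is the passage between the absoluteness of $T$ and genericity \emph{over $V$} of the derived real $z$, rather than genericity over the intermediate model in which $T$ lives. One must be careful that the leftmost branch $x$ is genuinely $\clubsacks$-generic over $V$ (this is exactly what Definition \ref{def:amoeba} delivers, applied in $N$ as a $<\kappa$-closed extension of $V$), and that the decoding map $x \mapsto z$ is definable from parameters in $V$ together with $S \in V$, so that the density computations can legitimately be carried out in $V$ and then pulled up. A secondary subtlety is ensuring the club-splitting freedom is truly available: along the leftmost branch the split-levels form a club whose lengths, by stationarity and co-stationarity of $S$, meet both $S$ and its complement cofinally, so one can always steer the next coded bit to the required value; making this ``steering'' precise at limit stages, where $<\kappa$-closure of $T$ must be invoked to take limits of splitnodes, is where the care is needed.
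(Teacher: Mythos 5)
Your construction is the second coding from Remark \ref{remark1}, and the step on which you hang everything --- ``show that the set of conditions forcing `$\check z$ end-extends $s$ and enters $D$' is dense in $\clubsacks$ in $V$, then invoke genericity of the leftmost branch'' --- is not well formed, and this is exactly the gap. The real $z$ is \emph{not} the evaluation of any $\clubsacks$-name in $V$: its definition requires the tree $T$ itself as a parameter (the splitting nodes of $T$ along the leftmost branch $x$), and $T \in N \setminus V$. All that genericity of $x$ over $V$ controls is the filter $H_x = \{T' \in \clubsacks : x \in [T']\}$ against dense subsets of $\clubsacks$ lying in $V$; it says nothing about how the splitting levels of $T$ sit relative to $S$. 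Two absolute generic trees can share their leftmost branch yet have completely different splitting patterns, producing different $z$'s from the same generic filter, so there is nothing for a $\clubsacks$-condition in $V$ to force about $z$. The ``steering'' you describe (choosing the next splitnode length to fall in $S$ or in $\kappa\setminus S$) is a density argument in the \emph{amoeba} $\asacks^\club$ with the end-extension ordering, where a condition $(p,T)$ lets you extend the frozen part inside $T$; that is the setting of Remark \ref{remark1}, and it is unavailable here because $N$ is an arbitrary $<\kappa$-closed extension containing an absolute generic tree --- the theorem is stated (as the paper notes right after it) precisely to be robust under arbitrary refinements of the amoeba.

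Moreover, your particular coding provably cannot work for all such $N$. By the Claim used in the proof of the Coding Lemma (Lemma \ref{coding-lemma}), applied with the single stationary set $S$ at every successor step of a fusion, the trees all of whose successor-order-type splitnodes have length in $S$ are dense in $\clubsacks$ (this is also the content of the unlabelled Remark after the Coding Lemma: \emph{any} prescribed coding sequence $\bar\tau$ can be realized below any tree). Nothing in your argument excludes that the absolute generic tree in $N$ is of this form --- a refined amoeba confined to such trees is exactly the kind of $N$ the theorem must handle --- and for such $T$ your $z$ is constantly $0$. Note also that the branch alone can never suffice: for $\kappa$ inaccessible, the usual fusion argument shows $\clubsacks$ is $\kappa^\kappa$-bounding, so $V[x]$ contains no Cohen $\kappa$-real over $V$ at all; any correct proof must exploit tree data in a way that is anchored to $V$. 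That is what the paper's proof does and yours does not: it fixes a family $\{S_\tau : \tau \in 2^{<\kappa}\}$ of pairwise \emph{disjoint} stationary sets, proves the Coding Lemma, introduces an auxiliary Cohen real $c$ over $N$ used only through the forcing relation of $\cohen$ over $N$ (so the output stays in $N$), extracts from each condition $s$ and each maximal antichain $A$ of $\clubsacks$ in $V$ a $V$-tree $T_A$ with $s \force f(\dot c) \in [T_A]$, shows the intersection $B(s)$ of these $V$-trees contains some $T(s) \in \clubsacks$ (the Fact), and assembles $z \in N$ by concatenating the coding blocks $\tau^i_{\alpha_{i+1}}$ of the resulting trees $T^i$, with Cohenness witnessed by antichains inside the dense sets $\mathbf{S}(D)$. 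In short: Cohenness must be certified by objects of $V$ (antichains and the trees $T_A$), not by the splitting structure of $T$, which $V$ cannot see and which an adversarially refined amoeba can prescribe in advance.
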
 
We remark that this result highlights a strong difference from the standard Sacks forcing in the $\omega$-case, for which one can construct an amoeba for Sacks satisfying pure decision and the Laver property. Theorem \ref{sacks-amoeba-cohen} essentially asserts that no matter how we refine our amoeba for $\sacks^\club$, we never find a version not adding Cohen $\kappa$-reals.

To prove Theorem \ref{sacks-amoeba-cohen} we introduce a way to read off a Cohen $\kappa$-sequence from $T \in \clubsacks$ in an \emph{absolute} way. This coding will use stationary subsets of $\kappa$.

So fix $\{S_\tau: \tau \in 2^{<\kappa}\}$ family of disjoint stationary subsets of $\kappa$ in $V$. Let $\{ \lambda_\alpha: \alpha < \kappa \}$ be an increasing enumeration of all limit ordinals $< \kappa$. The set $H_\alpha$ we will refer to is meant in different ways, depending whether we are dealing with a successor or an inaccessible $\kappa$. 
\begin{itemize}
\item[] \emph{inaccessible case}: put $H_\alpha:= 2^{\leq \lambda_\alpha}$.
\item[] \emph{successor case}: let $W^0$ be a well-ordering of all $s \in 2^{\lambda_0}$ and for every $\alpha < \kappa$, $t \in 2^{\lambda_\alpha}$, let $W^{\alpha+1}_t$ be a well-ordering of all $s \in 2^{\lambda_{\alpha+1}}$ extending $t$. In what follows $ot(s)$ refers to the order type of $s \in 2^{\lambda_{\alpha+1}}$ in the well-ordering $W^{\alpha+1}_t$. Then recursively define:
\begin{itemize}
\item $H_0:= \{ t \in W^0: ot(t) < \lambda_0 \}$ and 
\item $t \in H_{\alpha+1}$ iff there exists $\{ t_\xi: \xi \leq \alpha +1\}$ with the following properties:
\begin{itemize}
\item for every $\xi \leq \alpha$, one has: $t_\xi \in 2^{\lambda_\xi}$, $t_{\xi+1} \in W^{\xi+1}_{t_\xi}$, and $ot(t_{\xi})<\lambda_{\alpha+1}$, $ot(t_{\alpha+1}) < \lambda_{\alpha+1}$,
\item $t_\lambda= \lim_{\xi < \lambda} t_\xi$, for $\lambda$ limit,  \
\item $\xi < \xi' \Rightarrow t_\xi \subseteq t_{\xi'}$,
\item $t=t_{\alpha+1}$.
\end{itemize}
\item$H_\lambda:= \bigcup_{\alpha < \lambda} H_\alpha$, for $\lambda$ limit.
\end{itemize}
Note that every $H_\alpha$ has size $< \kappa$.
\end{itemize}

\begin{lemma}[Coding Lemma] \label{coding-lemma} Let $T \in \clubsacks$, $\{ D_\xi: \xi < \kappa \}$ be a $\subseteq$-decreasing family of open dense subsets of $\cohen$ in $V$. Then there is $T' \in \clubsacks$, $T' \leq T$ such that for every $\alpha < \kappa$, there exists $\tau_\alpha \in 2^{<\kappa}$ such that 
\[
\forall t \in \splitting_{\alpha+1}(T') \forall s \in H_\alpha (|t| \in S_{\tau_\alpha} \land s^\conc \tau_\alpha \in D_\alpha).
\]
\end{lemma}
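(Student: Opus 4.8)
The plan is to build $T'$ as the result of a club-Sacks fusion $\langle T_\alpha : \alpha<\kappa\rangle$, with $T_0=T$, $T_{\alpha+1}\leq_\alpha T_\alpha$ (i.e. $\splitting_\alpha(T_{\alpha+1})=\splitting_\alpha(T_\alpha)$, so nothing of order type $\leq\alpha$ is disturbed) and $T_\lambda=\bigcap_{\xi<\lambda}T_\xi$ at limits, handling one successor splitting level per step. At step $\alpha+1$ I first select the witness $\tau_\alpha$ from $(D_\alpha,H_\alpha)$, and then thin $T_\alpha$ so that every splitting node of order type $\alpha+1$ acquires a length lying in $S_{\tau_\alpha}$. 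Since the clause to be ensured only mentions successor order types $\alpha+1$, I never have to prescribe the length of a splitting node of limit order type: those are forced to be the suprema of the preceding splits and are taken care of automatically by the fusion. Putting $T':=\bigcap_{\alpha<\kappa}T_\alpha$ and invoking that $\clubsacks$ satisfies $\kappa$-Axiom A (so fusion sequences have lower bounds in $\clubsacks$) yields $T'\leq T$ in $\clubsacks$ with $\splitting_{\alpha+1}(T')=\splitting_{\alpha+1}(T_{\alpha+1})$ for every $\alpha$, which is exactly the set whose lengths I have arranged to land in $S_{\tau_\alpha}$.

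\emph{Choosing $\tau_\alpha$.} The only feature of $H_\alpha$ I use is $|H_\alpha|<\kappa$, noted right after its definition. Enumerate $H_\alpha=\{s_\iota:\iota<\mu\}$ with $\mu<\kappa$ and build a $\subseteq$-increasing chain $\langle\sigma_\iota:\iota\leq\mu\rangle$ in $2^{<\kappa}$: at successor steps use that $D_\alpha$ is dense below $s_\iota^\conc\sigma_\iota$ to pick $\sigma_{\iota+1}\supseteq\sigma_\iota$ with $s_\iota^\conc\sigma_{\iota+1}\in D_\alpha$, and take unions at limits. Put $\tau_\alpha:=\sigma_\mu$; since $\kappa$ is regular a union of fewer than $\kappa$ elements of $2^{<\kappa}$ is again in $2^{<\kappa}$, so $\tau_\alpha\in 2^{<\kappa}$. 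Because $D_\alpha$ is open, i.e. closed under extension, $s_\iota^\conc\tau_\alpha\supseteq s_\iota^\conc\sigma_{\iota+1}\in D_\alpha$ gives $s_\iota^\conc\tau_\alpha\in D_\alpha$ for every $\iota$, that is $s^\conc\tau_\alpha\in D_\alpha$ for all $s\in H_\alpha$. This fixes $\tau_\alpha$ and with it the target stationary set $S_{\tau_\alpha}$.

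\emph{Driving the splits into $S_{\tau_\alpha}$.} The key structural observation is that, since $T_{\alpha+1}\subseteq T_\alpha$, every splitting node of $T_{\alpha+1}$ is already a splitting node of $T_\alpha$; hence I cannot create splits, only select among the existing ones. This is enough: for each $t\in\splitting_\alpha(T_\alpha)$ and each $i\in 2$, fix a branch $x\in[T_\alpha]$ through $t^\conc i$. As $T_\alpha\in\clubsacks$, the set of lengths of splitting nodes of $T_\alpha$ along $x$ is club, so its tail above $|t|$ meets the stationary set $S_{\tau_\alpha}$; choose a splitnode $v_{t,i}\supseteq t^\conc i$ of $T_\alpha$ with $|v_{t,i}|\in S_{\tau_\alpha}$. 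I then replace the subtree of $T_\alpha$ above $t^\conc i$ by the single branch from $t^\conc i$ up to $v_{t,i}$ followed by the full $(T_\alpha)_{v_{t,i}}$, doing this independently over all pairs $(t,i)$ — their number is irrelevant, since the modifications take place in pairwise disjoint subtrees. Nothing at or below order type $\alpha$ is touched, so $T_{\alpha+1}\leq_\alpha T_\alpha$, and $v_{t,i}$ is now the first splitting node above $t^\conc i$, hence of order type $\alpha+1$, with $|v_{t,i}|\in S_{\tau_\alpha}$. Thus every $t\in\splitting_{\alpha+1}(T_{\alpha+1})$ satisfies $|t|\in S_{\tau_\alpha}$.

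The step I expect to cause the most trouble is verifying that $T_{\alpha+1}$, and ultimately $T'=\bigcap_\alpha T_\alpha$, really remain in $\clubsacks$. Making the order-$(\alpha+1)$ split jump high into $S_{\tau_\alpha}$ leaves a splitless gap between consecutive splits, and one must check this does not wreck closure of the split-length set along a branch: a gap only produces a point isolated from below, never a limit point missing from the set, so closedness survives while unboundedness is immediate. The genuinely delicate case is at limit stages $\lambda$ of the fusion, where the order-$\lambda$ split must be the supremum of the stabilized lower splits and must still be an honest splitting node; this is precisely the content of the club-Sacks fusion lemma underlying $\kappa$-Axiom A for $\clubsacks$, which I would invoke rather than re-prove closure by hand. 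Granting this, $T'\in\clubsacks$, $T'\leq T$, and both conjuncts $|t|\in S_{\tau_\alpha}$ and $s^\conc\tau_\alpha\in D_\alpha$ hold for all $t\in\splitting_{\alpha+1}(T')$ and all $s\in H_\alpha$, completing the proof.
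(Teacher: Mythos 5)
Your proposal is correct and follows essentially the same route as the paper's proof: a fusion sequence $T_{\alpha+1}\leq_\alpha T_\alpha$ where at each successor step one first chooses $\tau_\alpha$ (using $|H_\alpha|<\kappa$ and open density of $D_\alpha$) and then prunes above each side of each order-type-$\alpha$ splitnode to a splitnode of $T_\alpha$ whose length lies in $S_{\tau_\alpha}$, which is exactly the paper's Claim. The only differences are expository: you spell out the $<\kappa$-length chain argument for $\tau_\alpha$ and the preservation of closedness of split-levels, both of which the paper leaves implicit.
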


The first step is to prove the following.
\begin{claim}
Given $T \in \clubsacks$, $\alpha \in \kappa$, $\tau \in 2^{<\kappa}$, there is $T' \leq_{\alpha} T$, such that
\[
\forall t \in \splitting_{\alpha+1}(T') (|t| \in S_\tau).
\] 
\end{claim}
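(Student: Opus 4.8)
The plan is to build $T'$ by a fusion argument in which, at each successive splitting level $\alpha$, we adjust the tree so that all splitnodes of order type $\alpha+1$ have length landing in the prescribed stationary set $S_\tau$. The key point is that $\clubsacks$ gives us, above any node, a \emph{club} of splitting-node lengths along every branch; intersecting a club with the stationary set $S_\tau$ is always nonempty, so we are never blocked when we try to push a splitnode's length into $S_\tau$. So first I would fix $T$, $\alpha$ and $\tau$, and reduce to selecting, for each would-be splitnode of order type $\alpha+1$, an actual splitnode whose length belongs to $S_\tau$.

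The construction I have in mind proceeds level-by-level up to order type $\alpha$, leaving the tree essentially unchanged there (this is what guarantees $T' \leq_\alpha T$, since by definition $T' \leq_\alpha T$ means $T' \le T$ and $\splitting_\alpha(T')=\splitting_\alpha(T)$, so we must not disturb splitnodes of order type $< \alpha+1$). Concretely, enumerate the splitnodes of $T$ of order type $\alpha$; above each such node $t$, and above each of its two immediate splitting-successor directions, I look at the branch continuing leftmost (or in any fixed canonical way) and consider the set $C_t$ of lengths of splitnodes strictly above $t$. By the club-splitting property defining $\clubsacks$, $C_t$ contains a club, hence $C_t \cap S_\tau \neq \emptyset$. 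I then select the first splitnode $t^\ast \supseteq t$ with $|t^\ast| \in S_\tau$, and prune the tree above $t$ so that the \emph{new} splitnode of order type $\alpha+1$ above $t$ is exactly $t^\ast$: that is, I collapse the portion of $T$ between $t$ and $t^\ast$ into a non-splitting spine, then keep $T$ unchanged above $t^\ast$. Doing this simultaneously (and coherently, respecting limits) over all order-type-$\alpha$ splitnodes, and taking the resulting subtree $T'$, yields a condition in $\clubsacks$ all of whose order-type-$(\alpha+1)$ splitnodes have length in $S_\tau$.

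The main obstacle will be verifying that the pruned tree $T'$ is still in $\clubsacks$ — in particular, that it remains perfect and that along every branch the set of splitting-node lengths is still club. Perfection is fine because above $t^\ast$ we retain all of $T$'s splitting structure, so every node still extends to a splitnode. For the club condition, the concern is at limit stages of the splitting levels: one must check that collapsing the interval $(t, t^\ast]$ on each branch does not destroy closure of the splitting-length set at limits. This is where the $<\kappa$-closure of the trees and the fact that each $C_t$ already contains a club are used: the set of splitting lengths of $T'$ along a branch is obtained from that of $T$ by deleting, within each bounded gap, finitely-many-or-fewer lengths and re-indexing, and the limit points are preserved because $t^\ast$ is chosen as a genuine splitnode (hence a limit of lower splitnodes remains a splitnode in $T'$). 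Finally I would remark that the argument is uniform in $\tau$ and $\alpha$, which is exactly what the subsequent iteration in the Coding Lemma needs.
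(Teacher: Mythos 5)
Your construction is essentially the paper's own proof: for each $t \in \splitting_\alpha(T)$ and each direction $i \in \{0,1\}$, the paper picks a splitnode $\sigma(t,i) \supseteq t^\conc i$ with $|\sigma(t,i)| \in S_\tau$ (possible exactly because the stationary set $S_\tau$ meets the club of splitting lengths above $t^\conc i$) and sets $T' := \bigcup \{ T_{\sigma(t,i)} : t \in \splitting_\alpha(T), i \in \{0,1\} \}$, which is precisely your ``collapse to a spine, keep $T$ above'' description. Two small points of care: you must choose one such node \emph{per direction} $t^\conc i$ (a single $t^\ast \supseteq t$, read literally, would make $t$ non-splitting in $T'$ and destroy $\splitting_\alpha(T')=\splitting_\alpha(T)$), and the splitting lengths deleted in a gap can be infinitely many rather than ``finitely-many-or-fewer'' --- what actually preserves the club property is that the deleted lengths form a bounded set whose supremum $|t^\ast|$ is itself retained as a splitnode of $T'$.
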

\begin{proof}[Proof of Claim.]
Fix $\alpha < \kappa$ and $\tau \in 2^{<\kappa}$. For every $t \in \splitting_{\alpha}(T)$, $i \in \{ 0,1 \}$, pick $\sigma(t,i) \in \splitting(T)$ such that $\sigma(t,i) \supseteq t^\conc i$ and $|\sigma(t,i)| \in S_\tau$. Note that we can do that, since $S_\tau$ is stationary and we have club many splitnodes above each $t^\conc i$. Moreover, note that for every $\tau' \neq \tau$, $|\sigma(t,i)| \notin S_{\tau'}$, since the stationary sets are pairwise disjoints. Finally let $T':= \bigcup \{ T_{\sigma(t,i)}: t \in \splitting_{\alpha}(T), i \in \{ 0,1 \}  \}$. By construction, $T' \in \clubsacks$, $T' \leq_{\alpha} T$ and has the desired property.
\end{proof}

\begin{proof}[Proof of Coding Lemma.] We build a fusion sequence $\{ T_\alpha: \alpha<\kappa \}$, with $T_{\alpha+1}\leq_{\alpha} T_\alpha$ as follows (we start with $T_0=T$):
\begin{itemize}
\item[] case $\alpha+1$: first find $\tau_\alpha \in 2^{< \kappa}$ so that $\forall s \in H_{\alpha}$, $s^\conc \tau_\alpha \in D_\alpha$; note this is possible since $|H_\alpha| < \kappa$ and $D_\alpha$ is open dense in $\cohen$. (Moreover, note that $\tau_\alpha$ is not uniquely determined, but we can simply choose the $\leq_{\text{lex}}$-least.) 

Then apply the previous claim for $T=T_\alpha$ and $\tau=\tau_\alpha$ to obtain $T_{\alpha+1} \leq_{\alpha} T_\alpha$ such that 
\[
\forall t \in \splitting_{\alpha+1}(T_{\alpha+1}) \forall s \in H_\alpha (|t| \in S_{\tau_\alpha} \land s^\conc \tau_\alpha \in D_\alpha).
\]
\item[] case $\lambda$ limit: put $T_\lambda:= \bigcap_{\alpha<\lambda} T_\alpha$.
\end{itemize}
Finally put $T':= \bigcap_{\alpha} T_\alpha$.
Note that, $\splitting_\alpha(T')=\splitting_\alpha(T_\alpha)$. Hence, by construction $T'$ has the desired properties.

\end{proof}

\begin{remark}
Given $T \in \clubsacks$, let $\{ \tau_\alpha: \alpha<\kappa \}$ be such that, for every $\alpha<\kappa$, the leftmost node $t^\alpha \in \splitting_{\alpha+1}(T)$ satisfies $|t^\alpha| \in S_{\tau_{\alpha}}$. We call $\{ \tau_\alpha: \alpha < \kappa \}$ the \emph{Cohen $\kappa$-sequence associated with $T$}. 

Note that for every $T \in \sacks^\club$ for every $\bar{\tau}:=\{ \tau_\alpha: \alpha<\kappa \} \in (2^{<\kappa})^\kappa$ there exists $T'\leq T$ such that $\bar{\tau}$ is the Cohen $\kappa$-sequence associated with $T'$. 
\end{remark}

\begin{proof}[Proof of Theorem \ref{sacks-amoeba-cohen}.]
Let $c$ be a Cohen $\kappa$-real over $N$. Let $T_G \in N$ denote the absolute $\clubsacks$-generic tree over $V$, $\bar f: 2^{<\kappa} \rightarrow \splitting(T_G)$ $\subseteq$-isomorphism, $f: 2^\kappa \rightarrow [T_G]$ the homeomorphism induced by $\bar f$, and finally let $x = f(c) \in [T_G]$. Remark that $T_G, f, \bar f \in N$, while $f(c) \in N[c] \setminus N$ obviously. Note that, in $N[c]$, $x$ is $\sacks^\club$-generic over $V$, since $T_G$ is absolute generic and $N[c] \supseteq V$ is a forcing extension via a $<\kappa$-closed poset. Let $\mathcal{A}$ be the family of all maximal antichains of $\sacks^\club$ in $V$. For every $A \in \mathcal{A}$, pick $T_A \in A$ and $s \in \cohen$ such that $s \force \dot x \in [T_A]$. For every $s \in \cohen$, one can then define, 
\[
B(s):= \bigcap \{ T_A: A \in \mathcal{A} \land s \force \dot x \in [T_A]\}.
\] 

\begin{fact}
$B(s)$ contains a tree in $\sacks^\club$.
\end{fact}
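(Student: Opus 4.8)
The plan is to show that a single, very natural tree already lies inside $B(s)$, namely the cone $(T_G)_{\bar{f}(s)}$. First I would set $u := \bar{f}(s)$. Since $\bar{f}$ maps $2^{<\kappa}$ $\subseteq$-isomorphically onto $\splitting(T_G)$, the node $u$ is a splitnode of $T_G$, and because $\clubsacks$ is a tree-forcing (closed under passing to $T_t$) we get $(T_G)_u \in \clubsacks$. The homeomorphism $f$ induced by $\bar{f}$ carries the basic clopen set $[s]$ onto $[(T_G)_u]$, i.e.\ $f([s]) = [(T_G)_u]$: every $c' \supseteq s$ has $\bar{f}(s) \subset f(c')$, and conversely each branch of $(T_G)_u$ is $f(c')$ for the $c' \supseteq s$ read off from its splitnodes above $u$. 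This is the one verification that uses nothing beyond the definitions of $\bar{f}$ and $f$.

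Next I would translate the forcing clause. Writing $\dot x = f(\dot c)$ with $\dot c$ the $\cohen$-name for the Cohen $\kappa$-real, and noting that each $[T_A]$ is a closed set coded in $V \subseteq N$, the preimage $f^{-1}([T_A])$ is closed in $2^\kappa$. A Cohen condition forces the generic into a ground-model closed set exactly when the corresponding basic clopen set is contained in that closed set (if $[s]\not\subseteq f^{-1}([T_A])$, some $[t]$ with $t\supseteq s$ misses it, so $t \force \dot x\notin[T_A]$). Hence, for every $A \in \mathcal{A}$ whose chosen $T_A$ enters the intersection, $s \force \dot x \in [T_A]$ is equivalent to $f([s]) \subseteq [T_A]$, that is, $[(T_G)_u] \subseteq [T_A]$.

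Finally I would pass from bodies back to trees. Since every node of a club Sacks tree lies on a cofinal branch, $(T_G)_u$ coincides with the tree of initial segments of $[(T_G)_u]$; thus $[(T_G)_u] \subseteq [T_A]$ upgrades to the inclusion of trees $(T_G)_u \subseteq T_A$ for each such $A$. Intersecting over all $A$ with $s \force \dot x \in [T_A]$ gives $(T_G)_u \subseteq B(s)$, so $(T_G)_u \in \clubsacks$ witnesses the Fact. The conceptual steps are short, and the only point genuinely needing care is the forcing computation of the middle paragraph: checking that $s \force \dot x \in [T_A]$ is really equivalent to the geometric inclusion $f([s]) \subseteq [T_A]$, uniformly across all the ground-model trees $T_A$ occurring in $B(s)$. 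Everything else is bookkeeping about $\bar{f}$ together with the absence of dead ends in club Sacks trees.
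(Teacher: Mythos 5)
Your proof is correct, but it takes a genuinely different route from the paper's. The paper argues by contradiction: assuming $B(s)$ contains no tree of $\clubsacks$, it shows that $D_s:=\{t' \in 2^{<\kappa} : [\bar f(t')]\cap[B(s)]=\emptyset\}$ is a dense subset of $\cohen$ lying in $N$, and then invokes the genericity of $c$ over $N$ to find an initial segment of $c$ in $D_s$, contradicting $s \force \dot x \in [B(s)]$. You instead produce an explicit witness, working entirely inside $N$ and never using the Cohen real $c$ or its genericity: unwinding the forcing relation, $s \force \dot x \in [T_A]$ implies $\bar f(t) \in T_A$ for every $t \supseteq s$ (otherwise $t \force \dot x \notin [T_A]$ for the offending $t$), and since every node of $(T_G)_{\bar f(s)}$ is an initial segment of some splitnode $\bar f(t)$ with $t \supseteq s$, and $T_A$ is closed under initial segments, this yields $(T_G)_{\bar f(s)} \subseteq T_A$ for every $A$ entering the intersection, hence $(T_G)_{\bar f(s)} \subseteq B(s)$. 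Your approach buys constructivity and strictly more information --- $B(s)$ always contains the cone of $T_G$ above $\bar f(s)$, so in the remainder of the theorem one could simply take $T(s):=(T_G)_{\bar f(s)}$ --- whereas the paper's density argument is shorter to state but non-constructive. The one step in your write-up that deserves explicit care is the ``standard fact'' about Cohen conditions and ground-model closed sets: at uncountable $\kappa$ this must be verified at the level of nodes, since closed sets are coded by trees (one uses that initial segments of the generic lie in $N$ by $<\kappa$-closure of $\cohen$, and that every node of $2^{<\kappa}$ lies on a ground-model branch). Your parenthetical contrapositive does exactly this: the failure $[s]\not\subseteq f^{-1}[[T_A]]$ is witnessed by a splitnode $\bar f(t)\notin T_A$ with $t \supseteq s$, which exists because splitnodes are cofinal along every branch of $T_G$ --- the place where the club-splitting hypothesis enters --- and that node-level obstruction is what legitimizes the forcing step. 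With that made explicit, the argument is sound.
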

\begin{proof}[Proof of Fact.] To reach a contradiction, assume not. Define $D_{s}:= \{ t' \in 2^{<\kappa}: [\bar f(t')] \cap [B(s)]=\emptyset \}$. Note $D_s \in N$. For every $t \in 2^{<\kappa}$ there is $t^* \in \splitting(T_G)$ such that $t^* \supseteq \bar f(t)$ and $t^* \notin B(s)$; this is possible as $(T_G)_{\bar f(t)} \in \sacks^\club$ and so there is $t^* \in (T_G)_{\bar f(t)} \setminus B(s)$. Then pick $t' \in 2^{<\kappa}$ so that $\bar f(t')=t^*$ is in $D_s$ and $t' \supseteq t$. That implies $D_s$ is dense in $\cohen$. Hence, $c \cap D_s \neq \emptyset$, i.e., there is $i < \kappa$ such that $c\restric i \in D_s$, which gives $\force [\bar f (\dot c \restric i)] \cap [B(s)]=\emptyset$. But we know $s \force \dot x= f(\dot c) \in [B(s)]$, by definition. Contradiction.
\end{proof}

So we can assume for every $s \in 2^{<\kappa}$ there is $T(s) \subseteq B(s)$ in $\clubsacks$. Now let $\{ T^i: i \in \kappa \}$ enumerate all such $T(s)$'s, and let $\{  \tau^i_\alpha: \alpha < \kappa \}$ be the components of the Cohen $\kappa$-sequence associated with $T^i$. Then put $z:= \bigcup_{i < \kappa} \sigma_i$, where the $\sigma_i$'s are recursively defined as follows:
\begin{itemize}
\item $\sigma_0:= \emptyset$ 
\item $\sigma_{i+1}:= \sigma_i^\conc \tau^i_{\alpha_{i+1}}$, where $\alpha_{i+1}$ is chosen in such a way that $H_{\alpha_{i+1}} \ni \sigma_i$
\item $\sigma_i:= \bigcup_{j<i} \sigma_j$, for $i$ limit ordinal.
\end{itemize}

We aim at showing that $z$ is Cohen over $V$. Let $D \subseteq \cohen$ be open dense in $V$; we say $T$ satisfies the Coding Lemma for $D$ if the sequence is so that $D_\xi = D$, for every $\xi < \kappa$. Let $\mathbf{S}(D):=\{ T \in \clubsacks: T \text{ satisfies Coding Lemma for } D \}$ in $V$, which is a dense subset of $\clubsacks$. Let $A \subseteq \mathbf{S}(D)$ maximal antichain (note $A$ is a maximal antichain in $\sacks^\club$ as well, as $\mathbf{S}(D)$ is a dense subposet of $\sacks^\club$). Then pick $T^i \leq T_A$, for some $T_A \in A$. For every $s \in H_{\alpha_{i+1}}$ we have $s^\conc \tau^i_{\alpha_{i+1}} \in D$. By construction, $\bigcup_{j<i} \sigma_j \in H_{\alpha_{i+1}}$, and so $\sigma_{i+1} \in D$. Hence, for every $D \in V$ open dense of $\cohen$, there exists $\alpha < \kappa$ such that $z\restric \alpha \in D$, which means $z$ is Cohen over $V$.
\end{proof}

\begin{corollary}
Let $\poset{AS}$ be any amoeba for $\sacks^\club$ satisfying $<\kappa$-closure and $G$ be $\poset{AS}$-generic over $V$. Then there is $c \in 2^\kappa \cap V[G]$ which is Cohen over $V$.
\end{corollary}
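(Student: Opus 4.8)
The plan is to read this Corollary off directly from Theorem~\ref{sacks-amoeba-cohen}, instantiated at $N := V[G]$; essentially no new combinatorics is required, only a verification that the two hypotheses of that Theorem are met in the present situation. So the whole argument amounts to checking that an arbitrary $<\kappa$-closed amoeba for $\clubsacks$ automatically puts us in the setting of the Theorem.

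First I would unwind the definition of amoeba. Since $\poset{AS}$ is by assumption an amoeba forcing for $\clubsacks$, Definition~\ref{def:amoeba} tells us that forcing with $\poset{AS}$ adds an \emph{absolute} $\clubsacks$-generic tree $T$ over $V$. Hence, for $G$ that is $\poset{AS}$-generic over $V$, the model $N = V[G]$ contains such a tree $T$, which is exactly the first clause in the hypothesis of Theorem~\ref{sacks-amoeba-cohen} (``in $N$ there is an absolute $\clubsacks$-generic tree over $V$''). Second, I would invoke the standing assumption that $\poset{AS}$ satisfies $<\kappa$-closure: this makes $N = V[G]$ a forcing extension of $V$ via a $<\kappa$-closed poset, namely $\poset{AS}$ itself, which is precisely the second clause of the hypothesis. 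Note this $<\kappa$-closure is also what the definition of absolute genericity quantifies over, so the branches through $T$ are indeed guaranteed to be $\clubsacks$-generic over $V$ inside $N$.

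With both hypotheses verified, Theorem~\ref{sacks-amoeba-cohen} applied to the pair $V \subseteq N = V[G]$ produces some $z \in N \cap \gcantor = V[G] \cap \gcantor$ that is Cohen over $V$; setting $c := z$ completes the proof. I do not expect any genuine obstacle here — the entire content of the Corollary is already encoded in the Theorem, and the only mild point requiring attention is that the two conditions ``$N$ contains an absolute $\clubsacks$-generic tree over $V$'' and ``$N$ is a $<\kappa$-closed generic extension of $V$'' are exactly the two features that \emph{any} $<\kappa$-closed amoeba for $\clubsacks$ delivers by definition. Consequently the Theorem applies uniformly, independently of how $\poset{AS}$ was designed, which is the whole force of the Corollary: refining the amoeba cannot escape the addition of a Cohen $\kappa$-real.
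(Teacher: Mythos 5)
Your proposal is correct and follows exactly the paper's own argument: instantiate Theorem~\ref{sacks-amoeba-cohen} at $N = V[G]$, using the definition of amoeba (Definition~\ref{def:amoeba}) to supply the absolute $\clubsacks$-generic tree and the assumed $<\kappa$-closure of $\poset{AS}$ to satisfy the second hypothesis. Nothing is missing; your verification of the two hypotheses is precisely the content of the paper's proof.
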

\begin{proof}
It is simply a direct application of the main theorem. By definition, an amoeba for $\sacks^\club$ adds an absolute generic tree $T_G$. Then simply apply the theorem for $N=V[G]$, which satisfies the assumption, as $\poset{AS}$ is $<\kappa$-closed. 
\end{proof}

\begin{remark}
Given $T \in \clubsacks$ satisfying the Coding Lemma and \{$\tau_\alpha \in 2^{<\kappa}: \alpha <\kappa\}$ its associated $\kappa$-Cohen sequence, one can define $E^*(\alpha,T)=\bigcup_{s \in H_\alpha} [s^\conc \tau_\alpha]$ and then 
\[
E^*(T) :=  \bigcap_{\beta < \kappa} \bigcup_{\alpha \geq \beta} E^*(\alpha,T) \text{ and } E(T) := \bigcap_{t \in \splitting(T)} E^*(T_t).
\]
By construction, both $E^*(T)$ and $E(T)$ are dense $\PPi^0_2$ sets ($\kappa$-intersection of open dense). 
\end{remark}

So there is a way to associate $T \in \clubsacks$ with dense $\PPi^0_2$ sets. This might be useful to answer the following interesting and natural question.
\begin{question}
Let $\meager$ be the ideal of meager sets, $\ideal{I}_{\clubsacks}$ is the ideal of $\clubsacks$-meager sets, and $\leq_T$ denotes Tukey embedding. Is $\meager \leq_T \ideal{I}_{\clubsacks}$?
\end{question}

Note that we can prove an analogue of Proposition \ref{sacks-amoeba-cohen} by replacing $\kappa$-Cohen reals with dominating $\kappa$-reals. Recall that $z \in \kappa^\kappa$ is dominating over $V$ iff $\forall x \in \kappa^\kappa \cap V \exists \alpha < \kappa \forall \beta \geq \alpha (x(\beta) < z(\beta))$. Indeed the analogue of the Coding Lemma we need in this case is the following.

\begin{lemma} Let $T \in \clubsacks$, let $\{ t_\alpha: \alpha < \kappa \}$ denote the increasing sequence of leftmost splitting nodes in $\splitting_\alpha(T)$. Let $\{ x_\xi: \xi < \kappa \}$ be a family of $\kappa$-reals. Then there is $T' \in \clubsacks$, $T' \subseteq T$ such that for every $\alpha < \kappa$, $|t_{\alpha+1}| > \sum_{\xi \leq \alpha} x_\xi(\alpha) + |t_\alpha|$. 
\end{lemma}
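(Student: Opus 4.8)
The plan is to mimic the proof of the Coding Lemma (Lemma~\ref{coding-lemma}), replacing the requirement ``$|t| \in S_\tau$'' by the requirement that ``$|t|$ be large''. The whole point is that in the stationary coding we only used that $S_\tau$ meets the club of splitting-node lengths lying above each $t^\conc i$; here we need even less, namely that this club is \emph{unbounded} in $\kappa$, which is automatic. So the argument is in fact a strictly easier variant of the Coding Lemma.

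First I would isolate the single-step operation, the analogue of the Claim inside Lemma~\ref{coding-lemma}: given $T \in \clubsacks$, an ordinal $\alpha < \kappa$ and a bound $N < \kappa$, there is $T' \leq_\alpha T$ such that $|t| > N$ for every $t \in \splitting_{\alpha+1}(T')$. The construction is exactly as in the Claim: for each $t \in \splitting_\alpha(T)$ and $i \in \{0,1\}$ pick a splitting node $\sigma(t,i) \supseteq t^\conc i$ with $|\sigma(t,i)| > N$ --- possible because along any branch through $t^\conc i$ the lengths of splitting nodes of $T$ form a club, hence an unbounded subset of $\kappa$ --- and then set $T':= \bigcup \{ T_{\sigma(t,i)}: t \in \splitting_\alpha(T),\ i \in \{0,1\} \}$. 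By construction $T' \in \clubsacks$, $T' \leq_\alpha T$, and the splitting nodes of order type $\alpha+1$ in $T'$ are precisely the $\sigma(t,i)$, all of length $> N$.

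Next I would run a fusion $\{T_\alpha : \alpha < \kappa\}$ with $T_0 = T$ and $T_{\alpha+1} \leq_\alpha T_\alpha$. At a successor stage $\alpha+1$, let $t_\alpha$ be the leftmost node of $\splitting_\alpha(T_\alpha)$ (already determined, since $\splitting_\alpha$ is frozen from this stage on) and set
\[
N_\alpha := \sum_{\xi \leq \alpha} x_\xi(\alpha) + |t_\alpha| .
\]
Here $N_\alpha < \kappa$, being a sum of $<\kappa$ many ordinals each $<\kappa$, so regularity of $\kappa$ gives $N_\alpha < \kappa$. Applying the single-step operation to $T_\alpha$, $\alpha$ and $N_\alpha$ yields $T_{\alpha+1} \leq_\alpha T_\alpha$ with $|t| > N_\alpha$ for all $t \in \splitting_{\alpha+1}(T_{\alpha+1})$. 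At limit stages put $T_\lambda := \bigcap_{\beta < \lambda} T_\beta$, and finally $T' := \bigcap_{\alpha < \kappa} T_\alpha$; fusion for $\clubsacks$ guarantees $T' \in \clubsacks$ and $\splitting_\alpha(T') = \splitting_\alpha(T_\alpha)$ for every $\alpha$.

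It then remains only to read off the conclusion. Since $\splitting_\alpha(T') = \splitting_\alpha(T_\alpha)$, the leftmost splitting node of order type $\alpha$ in $T'$ is the very $t_\alpha$ used above, and likewise $t_{\alpha+1} \in \splitting_{\alpha+1}(T') = \splitting_{\alpha+1}(T_{\alpha+1})$, whence $|t_{\alpha+1}| > N_\alpha = \sum_{\xi \leq \alpha} x_\xi(\alpha) + |t_\alpha|$, as required. I do not expect a serious obstacle, as the heavy lifting (fusion, the club/unboundedness of splitting lengths) is already available. The only points needing genuine care are (i) the verification that $N_\alpha < \kappa$, which is exactly where regularity of $\kappa$ is used, and (ii) the observation that the $t_\alpha$ entering the bound is truly the final leftmost splitting node, guaranteed by the freezing of $\splitting_\alpha$ under $\leq_\alpha$-fusion.
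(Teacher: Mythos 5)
Your proof is correct and is exactly what the paper intends: the paper states this lemma without proof, presenting it as ``the analogue of the Coding Lemma,'' and your argument is precisely that analogue --- the single-step claim (replacing ``$|t| \in S_\tau$'' by ``$|t| > N$,'' using unboundedness of the club of splitting levels) followed by the same $\leq_\alpha$-fusion, with the correct use of regularity of $\kappa$ to bound $N_\alpha$ and of the freezing property $\splitting_\alpha(T')=\splitting_\alpha(T_\alpha)$ to identify $t_\alpha$.
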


This result about dominating $\kappa$-reals is not so surprising, since the same occur for the standard generic Sacks tree in the $\omega$-case.

\subsection{Other versions of Sacks amoeba}
May one generalize the Sacks forcing in order to get an amoeba with pure decision and not adding Cohen $\kappa$-reals? 

In this section we actually give a partially negative answer to this issue, by showing that even finer versions of amoeba for Sacks without club splitting have problems in killing all Cohen $\kappa$-sequences. So in what follows we are going to work with any version of amoeba for $\sacks$ being $<\kappa$-closed as a forcing notion. (For instance one might consider $\kappa$ measurable and require the set of splitting levels to be in a given normal measure on $\kappa$; we get back to this example in more details in the end of this section, as it does not play a specific role in the coming construcion).

If one analyses the proof to get an amoeba for Sacks forcing in the $\omega$-case one can realize that the main step is the following (with $\kappa=\omega$). 

\vspace{3mm}
\textbf{Partition property.}
Let $\{ T_i: i < \delta \}$, with $\delta < \kappa$ and $T_i \in \sacks$. Let $C: \prod_{i < \delta} \splitting(T_i) \rightarrow \{ 0,1 \}$ be a 2-coloring. Then there exist $T'_i \leq T_i$ such that for every $i < \delta$, $C \restric \prod_{i<\delta} \splitting(T'_i)$ is constant, i.e., there is $k \in \{ 0,1 \}$ such that $\forall \langle t_i:i<\delta \rangle \in \prod_{i<\delta}\splitting(T'_i)$, $C(\langle t_i:i<\delta \rangle)=k$.

\vspace{3mm}

We are going to build a counterexample to such a partition property in our generalized context $\kappa>\omega$. (Specifically our counterexample work for $\delta=\omega$.)

\begin{definition}
Given $T \subseteq 2^{<\kappa}$ perfect tree, we say that $T$ is \emph{$\omega$-perfect} iff there is an $\subseteq$-isomorphism $h: 2^\omega \rightarrow T$, i.e., for every $s,t \in 2^\omega$ one has $s \subseteq t \ifif h(s) \subseteq h(t)$ and $s \perp t \ifif h(s) \perp h(t)$ (roughly speaking, $T$ is $\omega$-perfect if it is an isomorphic copy of $2^\omega$ inside $2^{<\kappa}$). Then we define 
\[
\Omega:=\{ T \subseteq 2^{<\kappa}: T \text{ is $\omega$-perfect} \}.
\]
\end{definition}
Given $T \in \Omega$, $x_T$ denotes the leftmost $\omega$-branch in $T$. For every $T,T' \in \Omega$ we define
\[
T \sim T' \ifif x_T=x_{T'} \land \exists t \subseteq x_T (T_t = T'_t).
\]
It is easy to check that $\sim$ is an equivalence relation.
Pick a representative for each equivalence class. We now define the following coloring $C: \Omega \rightarrow \{ 0,1 \}$. 
 For every representative $T^*$ we put $C(T^*)=0$. Given $T \in \Omega$, pick the corresponding representative $T^* \sim T$ and let $\tau=\stem(T)$ and $\tau^*=\stem(T^*)$. Note that $\tau \subseteq \tau^*$ or $\tau^* \subseteq \tau$. 
 
 For every $t,t' \in T$, with $t \subseteq t' \subseteq x_T$, let
\[
\Delta(t,t'):= 
\begin{cases}
0
& \text{iff $|\{ s \in \splitting(T): t \subseteq s \subsetneq t'  \}|$ is even,} \\
1 & \text{else}
\end{cases}
\] 
Then define 
\[
C(T):= 
\begin{cases}
\Delta(\tau,\tau^*)
& \text{if $\tau \subseteq \tau^*$} \\
\Delta(\tau^*,\tau) & \text{if $\tau^* \subset \tau$}
\end{cases}
\] 
\begin{claim}
There is no $T \in \sacks$ homogeneous for $C$ w.r.t. $\omega$-perfect trees, i.e., there is no $T \in \sacks$ and $i \in \{ 0,1 \}$ such that $\forall T' \subseteq T, T' \in \Omega$ one has $C(T')=i$.
\end{claim}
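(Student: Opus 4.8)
The plan is to fix an arbitrary $T \in \sacks$ and produce two $\omega$-perfect subtrees $T_0, T_1 \subseteq T$ with $C(T_0) \neq C(T_1)$; since both are legitimate test trees, this refutes homogeneity for either candidate value $i \in \{0,1\}$. Producing a subtree of colour $0$ is the easy half: starting from any $\omega$-perfect $U \subseteq T$ and passing to $U_s$ for a splitting node $s$ of $U$ high enough on the leftmost branch $x_U$ that $s \supseteq \stem(U^*)$, one has $\stem(U^*) \subseteq \stem(U_s)$, and the defining clause of $C$ gives $C(U_s) = \Delta(\stem(U^*), \stem(U_s)) = 0$, because a perfect tree has no splitting node strictly below its own stem. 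So the whole difficulty lies in exhibiting a subtree of colour $1$.

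The mechanism I would exploit is that, within a single $\sim$-class, the colour is a parity that flips when the stem is moved past one splitting node. Fix $\omega$-perfect $U \subseteq T$, let $U^*$ be the representative of its class and $\tau^* := \stem(U^*) \subseteq x_U$. For every splitting node $s$ of $U$ on $x_U$, the subtree $U_s \subseteq T$ is again $\omega$-perfect, has the same leftmost branch $x_U$, and satisfies $(U_s)_s = U_s$; hence $U_s \sim U$, so $U_s$ is measured against the very same representative $U^*$ and the same $\tau^*$. Therefore $C(U_s)$ equals the parity of $|\{s' \in \splitting(U_s) : s \subseteq s' \subsetneq \tau^*\}|$ when $s \subsetneq \tau^*$, and is $0$ once $s \supseteq \tau^*$. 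Passing from $U_s$ to $U_{s'}$, with $s'$ the next splitting node above $s$, removes exactly the one node $s$ from the counted interval, so the parity flips; consequently, as soon as a single splitting node $s_0$ of $U$ lies strictly below $\tau^*$ on $x_U$, the two members $U_{s_0}$ and $U_{s_1}$ (where $s_1 \supseteq \tau^*$ is the next one) already carry opposite colours and we are done.

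It then remains to defeat the degenerate case $\tau^* \subseteq \stem(U)$, where no splitting node of $U$ sits below $\tau^*$. Here I would use perfectness of $T$ to arrange, already when building $U$, that $x_U$ runs through splitting nodes of $T$ cofinally low — in particular through some $w \in \splitting(T)$ with $w \subsetneq \stem(U)$ and $x_U(|w|)=0$. Recalling that $\tau^* = \stem(U^*)$ must itself fall at a left turn of $x_U$, I would then reconnect: let $V \subseteq T$ be the $\omega$-perfect tree with stem $w$ that splits once at $w$ (its right part an arbitrary $\omega$-perfect subtree of $T_{w^\conc 1}$) and then follows $U$ above $\stem(U)$, so that $x_V = x_U$ and $V_{\stem(U)} = U$, whence $V \sim U$. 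Since $\tau^* \subseteq \stem(U)$, the next splitting node of $V$ above $w$ already reaches past $\tau^*$, so when $w \subsetneq \tau^*$ the interval $[w,\tau^*)$ contains exactly the single splitting node $w$ of $V$, giving $C(V)=1$ against $C(U)=0$. The main obstacle — and the precise place where $\kappa > \omega$ enters, through the limit-level leftmost $\omega$-branch — is guaranteeing such a splitting node of $T$ strictly below $\tau^*$ on $x_U$ while keeping $V$ in the class of $U$; this reduces to controlling the position of the representative's stem relative to the splitting nodes of $T$ available along a left turn of $x_U$, and must be secured by the choice of the leftmost branch at the outset.
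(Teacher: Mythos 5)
Your argument stands or falls on the step you yourself flag at the very end: the existence, inside $T$, of a splitting node $w \subsetneq \tau^*$ sitting at a left turn of $x_U$ (equivalently, in the non-degenerate case, of a splitting node of $U$ strictly below $\tau^*$). This cannot be ``secured by the choice of the leftmost branch at the outset''. The representatives of the $\sim$-classes are fixed in advance, before $T$, $U$ and $x_U$ are chosen, and \emph{every} class contains a member whose stem is the \emph{first} left turn of the common branch: split once there, send the right successor into an arbitrary $\omega$-perfect tree (it need not be a subtree of $T$), and rejoin $U$ above $\stem(U)$. If the representatives happen to be chosen in this way, then for every class $\tau^*$ lies at the first left turn of its branch, so no node at all --- in particular no splitting node of $T$ --- lies strictly below $\tau^*$ at a left turn, no matter which $U \subseteq T$ and which branch you pick. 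Worse, under your reading of $\Delta$ (counting splitting nodes of the coloured tree itself) this choice of representatives makes $C$ identically $0$ on all of $\Omega$: every $V \in \Omega$ has $\stem(V) \supseteq \tau^*$, because $\stem(V)$ is itself a left turn of $x_V$, and $V$ has no splitting nodes below its own stem, so both clauses of the definition return $0$. So on your reading the claim itself would be false, and no completion of your argument is possible; the gap is not a missing technical lemma but a dead end.

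What this shows is that the clause of $C$ for $\tau^* \subset \tau$ must be read as counting splitting nodes contributed by the representative (equivalently, by the common part of the coloured tree and its representative), not by the coloured tree itself --- on your reading that clause is vacuous, which is exactly what your ``easy half'' exploits. With that reading the parity keeps flipping as the stem moves up past \emph{common} splitting nodes, also above $\tau^*$, and this is how the paper argues: given any $\omega$-perfect $T' \subseteq T$ with representative $T'^*$, it sets $\sigma := \stem(T' \cap T'^*)$, the first node at which $T'$ and $T'^*$ split compatibly, lets $\tau$ be the first splitnode of $T'$ extending $\sigma^\conc 0$, and compares $T'$ with $T'_\tau$. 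Both are subtrees of $T$ in the same class, and passing from $T'$ to $T'_\tau$ adds exactly one splitting node, namely $\sigma$, to the counted interval, so $C(T') \neq C(T'_\tau)$. Your flip mechanism (``the parity flips when the stem moves past one splitting node'') is the same germ of an idea, but you only permit yourself to run it strictly below $\tau^*$, where a subtree of $T$ may have nothing to offer; the paper runs it at $\sigma$, inside the splitting structure that every member of the class shares with its representative, and that structure is available inside every $T \in \sacks$.
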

Indeed, given any $T \in \Omega$, let $\sigma=\stem(S)$ with $S=T\cap T^*$ (where $T^*$ is the representative of $T$), and pick $\tau$ be the first splitnode of $T$ extending $\sigma^\conc 0$. Then clearly $C(T)\neq C(T_{\tau})$.

As specified above, in the following result, $\asacks$ be an amoeba for $\sacks$ satisfying $<\kappa$-closure, and defined like in Definition \ref{def:amoeba2} (but in a more general framework, not necessarily with club splitting levels).

\begin{corollary}
$\asacks$ adds Cohen $\kappa$-reals.
\end{corollary}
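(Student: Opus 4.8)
The plan is to exhibit a concrete Cohen $\kappa$-real read off from the generic tree $T_G$ by means of the parity coloring $C$ whose non-homogeneity was just established. The crucial enabling observation is that, since $\asacks$ is $<\kappa$-closed and $\kappa>\omega$, it adds no new $\omega$-sequences of ground-model objects: every node of $T_G$ lies in $2^{<\kappa}\cap V[G]=2^{<\kappa}\cap V$, so any $\omega$-perfect subtree $R$ of $T_G$ is determined by countably many $V$-nodes and hence $R\in V\cap\Omega$. Thus the ground-model coloring $C$, together with its fixed choice of $\sim$-representatives, applies to such $R$. Concretely, let $x$ be the leftmost branch of $[T_G]$, let $\langle t_\alpha:\alpha<\kappa\rangle$ enumerate the splitnodes of $T_G$ lying below $x$, and for each $\alpha$ let $R_\alpha\subseteq (T_G)_{t_\alpha}$ be the $\omega$-perfect tree generated by the splitnodes of $(T_G)_{t_\alpha}$ of order type $<\omega$. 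Then $R_\alpha\in V\cap\Omega$, and we set $c(\alpha):=C(R_\alpha)$, so that $c\in 2^\kappa\cap V[G]$.

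To prove $c$ is Cohen over $V$ I would show that for every open dense $D\subseteq 2^{<\kappa}$ in $V$ the set $E_D$ of conditions forcing $\exists\alpha\,(\dot c\restric\alpha\in D)$ is dense; as $\dot c$ and hence $E_D$ lie in $V$, genericity then yields $c$ meeting every $V$-dense set. Given $(p,T)$, I build a $\leq$-decreasing chain of conditions deciding $\dot c$ coordinate by coordinate while steering its values into $D$: having forced $\dot c\restric\beta=s$, density of $D$ gives $s'\supseteq s$ with $s'\in D$, and I realize $s'$ by forcing $c(\gamma)=s'(\gamma)$ for $\beta\leq\gamma<|s'|$. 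The point is that each successive value can be chosen freely, which is exactly the content of the Claim that no $T\in\sacks$ is homogeneous for $C$: the canonical $\omega$-perfect tree $R$ available above the current leftmost splitnode satisfies $C(R)\neq C(R_\tau)$, so, according to the desired bit, I either keep $R$ or re-root one splitnode higher to pass to $R_\tau$, and freeze the corresponding first $\omega$ splitting levels into the approximation $p$. Since some $\alpha<\kappa$ then satisfies $c\restric\alpha\in D$, the real $c$ is Cohen over $V$, and $\asacks$ adds a Cohen $\kappa$-real.

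The main obstacle is the bookkeeping in this freedom step. I must check that extending $p$ so that its terminal level lies above all splitnodes of the chosen piece yields a legitimate amoeba condition — $p$ remains $<\kappa$-closed of size $<\kappa$, its terminal nodes sit on a common splitting level $\splitting_\alpha(T')$, and the end-extension ordering of Definition \ref{def:amoeba2} is respected — and that the color flip, being merely a shift of the stem by one splitnode, stays inside $\sacks$ and merely relocates the leftmost branch, the relocation being absorbed harmlessly into the later stages of the recursion. Here it is essential that the parity function $\Delta$ is absolute and that $R_\alpha\in V$, so that the value of $C$ computed in $V$ is still correct in $V[G]$; this is precisely what transports the purely combinatorial non-homogeneity Claim into a density argument about the generic. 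Since the flip costs only one extra splitting level above an arbitrary node, the argument never invokes club (or any prescribed) splitting, so it applies uniformly to every $<\kappa$-closed amoeba for $\sacks$ of the type of Definition \ref{def:amoeba2}.
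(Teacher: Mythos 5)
Your overall strategy---reading bits off $T_G$ by applying the ground-model parity coloring $C$ to $\omega$-perfect subtrees rooted at leftmost splitnodes, using the non-homogeneity Claim as the color-flipping device, together with your preliminary observation that $<\kappa$-closure keeps every $\omega$-perfect subtree of $T_G$ (and hence the computation of $C$) in $V$---is exactly the paper's. But your indexing contains a fatal flaw: you root a block $R_\alpha$ at \emph{every} splitnode $t_\alpha$ along the leftmost branch, so consecutive blocks overlap; indeed $R_{\alpha+1}=(R_\alpha)_{t_{\alpha+1}}$, since the splitnodes of $(T_G)_{t_{\alpha+1}}$ of order type $<\omega$ are precisely the splitnodes of $(T_G)_{t_\alpha}$ of order type in $[1,\omega)$ lying above $t_{\alpha+1}$. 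Now, by the very definition of $C$ (parity of the splitting distance between the stem and the stem of the fixed representative---this is the computation behind the Claim), re-rooting an $\omega$-perfect tree at its next leftmost splitnode always flips the color. Hence your real satisfies $c(\alpha+1)=1-c(\alpha)$ for \emph{every} $\alpha<\kappa$, deterministically. In particular no initial segment of $c$ ever lies in the ground-model open dense set $\{ s\in 2^{<\kappa}: \exists \beta\, (\beta+1<|s| \land s(\beta)=s(\beta+1)) \}$, so $c$ is provably \emph{not} Cohen over $V$. The same overlap destroys your density argument: once you freeze the first $\omega$ splitting levels above $t_\gamma$ into $p$ in order to decide $c(\gamma)$, you have simultaneously decided $c(\gamma+n)$ for all $n<\omega$ (with alternating values), so the step ``each successive value can be chosen freely'' is false, and you cannot steer $\dot c$ into an arbitrary $s'\in D$.

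The repair is precisely the point where the paper's construction differs from yours: the blocks must be pairwise disjoint. The paper roots its $\omega$-perfect trees $q_\alpha$ only at the leftmost splitnodes of splitting order type $\lambda_\alpha$, where $\{\lambda_\alpha:\alpha<\kappa\}$ enumerates the limit ordinals below $\kappa$ (with $\lambda_0=0$), so that the $\alpha$-th bit is computed from the splitting levels in $[\lambda_\alpha,\lambda_\alpha+\omega)$ alone. Since these intervals are pairwise disjoint, given $(p,T)$ and a target word $w$ one can, inside each relevant block separately, either keep the canonical $\omega$-perfect tree or re-root it via the Claim to obtain $q_\xi$ with $C(q_\xi)=w(\xi)$, prune $T$ accordingly, and freeze all the $q_\xi$ at once into $p'$; this produces a condition forcing $z\supseteq \bar z^\conc w$ with no interference between coordinates. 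Your argument becomes essentially the paper's verbatim once you replace ``all splitnodes along the leftmost branch'' by ``leftmost splitnodes at limit splitting levels'' (keeping your---correct---absoluteness remark and your check that the frozen part remains a legitimate condition).
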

\begin{proof}
Given $T_G$ generic tree added via $\asacks$, we can define $z \in 2^\kappa$ as follows: First let $\{ \lambda_\alpha: \alpha <\kappa \}$ be an increasing sequence of all limit ordinals $<\kappa$ (but starting with $\lambda_0=0$), then let $\{ t_{\alpha}: \alpha < \kappa\}$ be an increasing subsequence of the leftmost splitnodes in $\splitting_{\lambda_\alpha}(T_G)$ and $q_\alpha$ be the $\omega$-perfect tree generated by $\splitting_{\lambda_\alpha+\omega}(T_{t_{\alpha}})$, i.e., the tree consisting of those nodes $s$ such that there exists $s' \supseteq s$ with $s' \in \splitting_{\lambda_\alpha+\omega}(T_{t_{\alpha}})$.  Then define $z(\alpha)=C(q_{\alpha})$, for every $\alpha < \kappa$.

To show that $z$ is Cohen we argue as follows: given $(p,T) \in \asacks$, and $w \in 2^{<\kappa}$ arbitrary, let $\bar z$ be the part of $z$ and $\{ t_\alpha: \alpha \leq \delta \}$ the  leftmost splitnodes with $t_\alpha \in \splitting_{\lambda_\alpha}(T_G)$ decided by $(p,T)$. Pick $t_\delta$ and let $p_0$ be the $\omega$-perfect tree generated by $\splitting_{\lambda_\delta +\omega}(T_{t_\delta})$. By definition of $C$, we can always find $q_0 \subseteq p_0$, $q_0 \in \Omega$, such that $C(q_0)=w(0)$. Then replace $p_0$ by $q_0$ in $T$, i.e., define $T^1$ as follows: $t \in T^1$ if and only if
\begin{itemize}
\item $t \subseteq t_\delta$, or
\item $t \perp t_\delta$ and $t \in T$, or
\item $t \supseteq t_\delta$ and $\exists s \in \term(q_0) (t \text{ and } s \text{ are compatible})$.
\end{itemize}

Then proceed by induction on $1 \leq \xi < |w|$: let $p_\xi$ be the $\omega$-perfect tree generated by $\splitting_{\lambda_{\delta+\xi}+\omega} (T^\xi_{t_{\delta+\xi}})$. Then pick an $\omega$-perfect tree $q_\xi \subseteq p_\xi$ such that $C(q_\xi)= w(\xi)$ and define $T^{\xi+1}$ by replacing $p_\xi$ with $q_\xi$ as follows: $t \in T^{\xi+1}$ if and only if
\begin{itemize}
\item $t \subseteq t_{\delta+\xi}$, or
\item $t \perp t_{\delta+\xi}$ and $t \in T^\xi$, or
\item $t \supseteq t_{\delta+\xi}$ and $\exists s \in \term(q_\xi) (t \text{ and } s \text{ are compatible})$.
\end{itemize}

(For $\xi$ limit ordinals, simply put $T^\xi := \bigcap_{\eta<\xi} T^\eta$.)

Finally let $T' :=\bigcap_{\xi < |w|} T^\xi$ and $p'$ be the tree generated by $p \cup \bigcup_{\xi < |w|} q_\xi$. By construction, $(p',T') \in \asacks$, $(p',T') \leq (p,T)$ and $(p',T') \force z \supseteq \bar z^\conc w$, and this shows that $z$ is $\kappa$-Cohen.
\end{proof}

\begin{question} Can we prove an analogue of Proposition \ref{sacks-amoeba-cohen}? In other words: can we prove that if $N \supseteq V$ is a ZFC-model containing an absolute $\sacks$-generic tree over $V$, then there is $c \in 2^\kappa \cap N$ Cohen over $V$? 
\end{question}

\begin{remark}
As we mentioned at the beginning of this section, an example of $<\kappa$-closed forcing, without splitting levels, can be obtained by working with $\kappa$ measurable. 
Let $\U$ be a normal measure on $\kappa$, and define
\[ \sacks^\U:= \{ T \in \sacks: \forall x \in [T] (\{ \alpha < \kappa: x \restric \alpha \in \splitting(T) \} \in \U) \}. \]

$(p,T) \in \poset{AS}^\U$ iff $T \in \sacks^\U$ and $p$ is an initial subtree of $T$. 
\end{remark}

\section{$\kappa$-Miller and $\kappa$-Silver trees}

The situation for $\kappa$-Miller and $\kappa$-Silver trees is rather similar to that of $\kappa$-Sacks forcing. 

\begin{definition}
A tree $T \subseteq \kappa^{<\kappa}$ is \emph{club $\kappa$-Miller} ($T \in \miller^\club$) iff 
\begin{itemize}
\item for every $s \in T$ there is $t \supseteq s$, $t \in \splitting(T)$ and $\{ \alpha \in \kappa: t^\conc \alpha \in T \}$ is club;
\item for every $x \in [T]$, $\{ \alpha \in \kappa: x \restric \alpha \in \splitting(T) \}$ is club.
\end{itemize}
A tree $T \subseteq 2^{<\kappa}$ is \emph{club $\kappa$-Silver} ($T \in \silver^\club$) iff 
\begin{itemize}
\item $T$ is perfect and for every $s,t \in T$ such that $|s|=|t|$ one has $s^\conc i \ifif t^\conc i$, for $i \in \{ 0,1 \}$;
\item $\{\alpha < \kappa: \exists t \in T (t \in \splitting(T)) \land |t|=\alpha  \}$ is club.
\end{itemize}

\end{definition}
When $\kappa$ is measurable, we can similarly define $\miller^\U$ and $\silver^\U$ by replacing ``being club" with ``being in normal measure $\U$".
Pure decision for $\kappa$-Miller forcing has been studied in detail by Brendle and Montoya (private communication: they indeed proved that $\miller^\club$ does not have pure decision and adds Cohen $\kappa$-reals, while $\miller^\U$ satisfies the $\kappa$-Laver property, and so it does not add Cohen $\kappa$-reals).
\begin{remark}
The situation occurring for $\kappa$-Silver trees is essentially the same as for $\kappa$-Sacks trees, when $\kappa$ is inaccessible; the only innocuous difference when defining the corresponding amoeba $\asilver$ is that one has to maintain the uniformity of the frozen part as well, and the same care has to be taken when doing the various fusion arguments. Apart from that, the reader can easily realize that all of the definitions and proofs in the previous section work for the $\kappa$-Silver case as well.  
\end{remark}

So we only focus on the $\kappa$-Miller case, which requires some slight modifications, though it is rather similar.
When defining the corresponding amoeba we have to require the frozen part to have size $<\kappa$. This will be crucial to have quasi pure decision, and therefore not to collapse $\kappa^+$. In what follows, $\kappa$ is inaccessible. 
\begin{definition}
We say that $(p,T) \in \poset{AM}_\kappa^\club$ iff the following hold:
\begin{itemize}
\item $T \in \miller^\club$, $p \subseteq T$ and $|p| < \kappa$, $p$ $<\kappa$-closed;
\item if $\gamma$ is limit and $\{ t_\alpha: \alpha  < \gamma \}$ is a $\subseteq$-increasing sequence of splitting nodes in $p$, then $\bigcup_{\alpha<\gamma} t_\alpha \in \splitting(p)$;
\item if $\gamma$ is limit and $\{\alpha_j: j<\gamma  \}$ is a set of ordinals in $\kappa$ such that $t^\conc {\alpha_j} \in p$, then $t^\conc \alpha^* \in p$, where $\alpha^*:= \bigcup_{j \in \gamma}\alpha_j$.
\end{itemize}
\end{definition} 

\begin{proposition}
Let $G$ be $\poset{AM}_\kappa^\club$-generic over $V$ and $T_G:=\bigcup\{ p: \exists T (p,T)\in G \}$. Then, for every $<\kappa$-closed forcing extension $N \supseteq V[G]$,
\[
N \models T_G \in \miller^\club \land \forall x \in [T_G] (x \text{ is $\miller^\club$-generic over $V$}).
\] 
\end{proposition}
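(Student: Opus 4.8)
The plan is to mirror, step by step, the proof given for the analogous statement about $\sacks^\club$, adjusting for the two features special to $\miller^\club$: splitnodes carry club-many (hence up to $\kappa$-many) successors, and the frozen part $p$ must stay of size $<\kappa$. First I would work in $V[G]$ and argue $T_G\in\miller^\club$. This splits into two assertions about $T_G$ as a subtree of $\kappa^{<\kappa}$: that every node extends to a splitnode whose successor set is club, and that a node that is a limit of splitnodes below it is itself a splitnode. The second passes to the union $T_G=\bigcup\{p:\exists T\,(p,T)\in G\}$ directly from the last two clauses in the definition of $\poset{AM}_\kappa^\club$; the first follows by a density argument (given a node, the set of conditions freezing a splitnode above it together with a club portion of its successors is dense). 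Both are first-order properties of the pair $(T_G,\kappa^{<\kappa})$, and since a $<\kappa$-closed forcing adds no new $<\kappa$-sequences, $\kappa^{<\kappa}$ and the set $T_G$ are the same in $N$; hence both properties are absolute. They jointly imply that \emph{every} branch of $T_G$ in $N$, old or new, has club splitting, so $N\models T_G\in\miller^\club$.

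For genericity of branches I would fix an open dense $D\subseteq\miller^\club$ in $V$ and set $E_D:=\{(p,T):\forall t\in b(p)\cap T\,(T_t\in D)\}$, the restriction to $t\in T$ being needed because $T_t$ is degenerate (not in $\miller^\club$) for boundary nodes outside $T$. The Miller-specific care enters in proving $E_D$ dense: given $(p,T)$, for each $t\in b(p)\cap T$ choose $S_t\leq T_t$ in $D$ with $\stem(S_t)\supseteq t$, and put $S:=\bigcup_{t\in b(p)\cap T}S_t$. Unlike the Sacks case, where $|b(p)|<\kappa$, a frozen splitnode may have $\kappa$-many boundary successors, so $S$ is a union of up to $\kappa$-many club Miller trees. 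One verifies $S\in\miller^\club$ using that distinct boundary nodes are pairwise incompatible: above each $t$ the tree $S$ coincides with $S_t$, so every node of $S$ extends to a splitnode with club successor set, and every branch of $S$ inherits, on a tail, the club splitting of the relevant $S_t$, with closure at limit levels supplied by the $<\kappa$-closure of $p$. Since $p$ is untouched, $|p|<\kappa$ is preserved and $p\subseteq S$ (each node of $p$ lies below some boundary node in $T$ whose $S_t$ contains it), so $(p,S)\in\poset{AM}_\kappa^\club$ and $(p,S)\leq(p,T)$ lies in $E_D$.

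Meeting $E_D$ via $G$ then yields genericity of branches in $V[G]$, exactly as in the Sacks argument. If $(p,T)\in G\cap E_D$ and $x\in[T_G]$, then $x$ must leave the $<\kappa$-sized tree $p$, necessarily at a successor step, at some boundary node $t=x\restric\alpha\in b(p)\cap T_G$; by directedness of $G$ every node of $T_G$ extending $t$ lies in $T$, so $(T_G)_t\subseteq T_t$ and $x\in[T_t]$ with $T_t\in D$. Hence $H_x:=\{T\in\miller^\club:x\in[T]\}$ meets $D$. The same $D_A$-argument as in the $\sacks^\club$ case (using the density of $D_A:=\{S:[S]\cap[A]=\emptyset\}$ when $A=T\cap T'\notin\miller^\club$) shows $H_x$ is a filter, so each branch is $\miller^\club$-generic over $V$.

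Finally I would transfer to $N$ verbatim from the Sacks proof: for a fixed $D\in V$, the statement that every branch of $T_G$ meets $D$ has the form $\exists F\subseteq\kappa^{<\kappa}\,\forall x\,(x\in[T_G]\to\exists t\in F\,(t\subset x\land(T_G)_t\in D))$, which is $\Sigma^1_2(\kappa^\kappa)$ in the parameters $T_G$ and $D$ and therefore upward absolute to the $<\kappa$-closed extension $N$. Applying this for each $D\in V$, together with the (absolute) filter property of $H_x$, gives $N\models\forall x\in[T_G]\,(x\text{ is }\miller^\club\text{-generic over }V)$, completing the proof. I expect the genuine obstacle to be the density of $E_D$ under a size-$\kappa$ boundary, that is, checking the glued union $S$ remains in $\miller^\club$ while the frozen part keeps size $<\kappa$; the absoluteness step is then routine and follows the $\sacks^\club$ case.
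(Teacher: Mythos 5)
Your skeleton is the paper's own route: the paper's proof literally says ``similar to the one of amoeba for Sacks'' and only spells out the $D_A$ filter check, and your $E_D$-density argument, filter check and $\Sigma^1_2(\kappa^\kappa)$ upward-absoluteness step mirror that Sacks proof faithfully; your verification that the glued tree $S=\bigcup_{t\in b(p)\cap T}S_t$ stays in $\miller^\club$ despite a size-$\kappa$ boundary is a correct and worthwhile elaboration of a point the paper suppresses. However, there is a genuine gap in the step you treat as the most routine one: that every node of $T_G$ extends to a splitnode of $T_G$ whose successor set is \emph{club}. You justify this by saying that ``the set of conditions freezing a splitnode above it together with a club portion of its successors is dense''. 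No such condition exists: a club has size $\kappa$, while the definition of $\poset{AM}_\kappa^\club$ requires $|p|<\kappa$ --- a restriction you yourself emphasize elsewhere in the proposal. So the first clause of $\miller^\club$ for $T_G$ cannot be obtained by any one-step freezing argument.

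The problem is not cosmetic. If the ordering on $\poset{AM}_\kappa^\club$ is taken over verbatim from the Sacks amoeba, i.e.\ $(p',T')\leq(p,T)$ iff $T'\subseteq T$ and $p'\supseteq p$ (or end-extends $p$), then for every fixed node $t\in\kappa^{<\kappa}$ the set of conditions $(p,T)$ with $t\notin T$ or $\successor(t,T)$ bounded is \emph{dense}: given $(p,T)$ with $t\in T$, discard from $T$ all nodes extending $t^\conc\alpha$ for $\alpha$ above $\sup\successor(t,p)$ (keeping at least two successors). The pruned tree is still in $\miller^\club$, because the first clause only demands densely many club-splitting nodes (found above the kept successors, where the tree is unchanged) and no branch of the pruned tree loses a splitting level; and the pair with the old $p$ is still a condition below $(p,T)$. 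Since $\kappa^{<\kappa}=\kappa$, genericity then forces \emph{every} node of $T_G$ to have bounded successor set, i.e.\ $T_G\notin\miller^\club$. Hence the first clause can only be saved if the ordering protects successor sets at frozen splitnodes (pruning of $T$ permitted only beyond the boundary of $p$, e.g.\ $\successor(t,T')=\successor(t,T)$ for $t\in\splitting(p)$, with frozen splitnodes required to be club-splitting in $T$); then $\successor(t,T_G)$ equals the protected club, by $\kappa$-many density steps freezing its elements one at a time. This protection clause, and the resulting argument, is exactly where the Miller amoeba genuinely differs from the Sacks one, and it is the one step your proposal (in fairness, like the paper's one-line proof) does not supply; your transfer to $N$ is fine once the first clause is actually established in $V[G]$.
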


\begin{proof}
Similar to the one of amoeba for Sacks. For checking that the set $H_x$ be a filter, for $T,T' \in H_x$, we argue by contradiction as follows: if $T \cap T' \notin \miller^\club$ then $D:=  \{ S : [S] \cap [T \cap T']=\emptyset \}$ is dense. Hence we should have $H_x \cap D \neq \emptyset$, i.e., $x \in [S]$ for some $S \in D$, but also $x \in [T \cap T']$.

\end{proof}

\begin{lemma} $\poset{AM}_\kappa^\club$ has quasi pure decision,  i.e., given $D \subseteq \poset{AM}_\kappa^\club$ and $(p,T) \in \poset{AM}_\kappa^\club$ there is $T' \in \miller^\club$ such that $T' \subseteq T$, $(p,T') \in \poset{AM}_\kappa^\club$ and 
\[
\forall (q, S) \leq (p,T') ((q,S) \in D \Rightarrow (q, T' {\downarrow} q) \in D).
\] 
\end{lemma}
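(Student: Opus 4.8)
The plan is to adapt the proof of the Sacks quasi pure decision lemma (Lemma \ref{quasi-pure}) to the $\kappa$-Miller setting, building a fusion sequence $\{T_\alpha : \alpha < \kappa\}$ with $T_{\alpha+1} \leq_\alpha T_\alpha$ that ``seals'' the dense set $D$ at every level. The crucial adaptation is that a $\kappa$-Miller splitnode has $\kappa$-many immediate successors arranged in a club, rather than the two successors of the Sacks case, and the frozen part $p$ is required to have size $<\kappa$. As in the Sacks proof, at stage $\alpha+1$ I would enumerate all the relevant ``terminal subtrees'' $\{p_\alpha^i : i < \delta_\alpha\}$ of an appropriate finite approximation of $T_\alpha$ up to splitting order type $\alpha+1$, and for each of them ask whether there is a condition $S$ extending the current tree with $(p_\alpha^i, S) \in D$; if so, graft $S$ in using the $T \ltimes^{q} S$ operation, otherwise leave the tree unchanged.

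First I would set up the analogues of the auxiliary operations $T[\beta]$ (the subtree generated by the order-type-$\beta$ splitnodes) and $T \ltimes^q S$ (the tree that agrees with $S$ above the terminal nodes of $q$ and with $T$ elsewhere), checking that both preserve membership in $\miller^\club$ and that $\ltimes$ respects the $\leq_\alpha$ ordering. Then I would run the double induction exactly as in Lemma \ref{quasi-pure}: an outer induction on $\alpha < \kappa$ producing the fusion sequence, and at each successor step an inner induction on $i < \delta_\alpha$ processing the terminal subtrees one at a time, taking intersections at limit stages and setting $T_{\alpha+1} := \bigcap_{i<\delta_\alpha} T_\alpha^i$. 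At limit $\alpha$ I put $T_\alpha := \bigcap_{\xi<\alpha} T_\xi$, and finally $T' := \bigcap_{\alpha<\kappa} T_\alpha$. The verification that $T'$ works is the same combinatorial argument: given any $(q,S) \leq (p, T')$ with $(q,S) \in D$, one locates $\alpha, i$ with $q = p_\alpha^i$, observes that the existential statement $\exists S_0\, (p_\alpha^i, S_0) \in D$ held at that stage, so the tree was modified via $\ltimes$ to witness $D$, whence $(q, T'{\downarrow}q) \in D$.

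The main obstacle, and the reason the excerpt flags that ``the frozen part'' must have size $<\kappa$, is controlling the cardinality $\delta_\alpha$ of the set of terminal subtrees to be processed at each step, so that the inner induction runs through fewer than $\kappa$ stages and the resulting intersections stay in $\miller^\club$ rather than collapsing $\kappa^+$. In the Sacks case $\delta_\alpha \leq 2^{2^\alpha} < \kappa$ by inaccessibility; here each splitnode branches into a club-sized ($\kappa$-many) set of successors, so the naive count of terminal subtrees of $T_\alpha[\alpha+1]$ is $\kappa^{|\alpha|}$-like and a priori too large. I expect the fix is that what matters is not the full immediate successor sets but the finitely-many or $<\kappa$-many relevant splitting \emph{levels} up to order type $\alpha+1$ together with the constraint $|p| < \kappa$ on the frozen part, so that by inaccessibility the number of $<\kappa$-closed subtrees of size $<\kappa$ realizable as $p_\alpha^i$ is again $<\kappa$; making this count precise, and checking that the grafting operation does not violate the club-splitting requirement on branches (every $x \in [T']$ must still have a club set of splitnodes), is where the real work lies.

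The remaining verifications---that each $T_{\alpha+1} \in \miller^\club$, that the fusion genuinely yields $(p,T') \in \poset{AM}^\club_\kappa$ with $T' \leq T$, and that limits of splitnodes and the club-successor condition are preserved under the iterated intersections---are routine given the definition of $\poset{AM}^\club_\kappa$, and I would dispatch them by the same bookkeeping used in the Sacks proof, noting that the $<\kappa$-closure of $p$ and the two limit clauses in the definition of $\miller^\club$ are exactly what guarantee the limit stages remain conditions.
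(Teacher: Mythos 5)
You have located the crux correctly---the Sacks count $\delta_\alpha \leq 2^{2^\alpha} < \kappa$ breaks down because a Miller splitnode has club-many, hence $\kappa$-many, immediate successors---but the repair you sketch does not work, and this is precisely where your outline diverges from the paper. Restricting to ``splitting levels up to order type $\alpha+1$'' together with $|p|<\kappa$ does not bring the number of candidate frozen parts below $\kappa$: already at the first step, the terminal subtrees of $T_\alpha[1]$ consisting of a single branch from the stem to one order-type-$1$ splitnode are pairwise distinct and there are $\kappa$-many of them (one for each member of the club $\successor(\stem(T_\alpha),T_\alpha)$), and under $\kappa^{<\kappa}=\kappa$ the total number of $<\kappa$-sized $<\kappa$-closed subtrees of bounded splitting order type is exactly $\kappa$. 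Inaccessibility cannot reduce this: the obstruction is the \emph{width} of the tree, not the depth, so ``counting levels'' is counting the wrong thing. With $\delta_\alpha=\kappa$ your inner induction runs for $\kappa$ steps inside a single stage, the $\leq_\alpha$-control over the stage is lost, and the intersection $\bigcap_{i<\kappa}T^i_\alpha$ of $\kappa$-many grafted trees need not be a condition at all (club successor sets and perfectness can be destroyed), so the fusion collapses.

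The paper's device, which is the one genuinely new idea relative to the Sacks lemma, cuts down the width as well as the depth. Fix for each $T \in \miller^\club$ the canonical enumeration $\{ t^T_\sigma: \sigma \in \kappa^{<\kappa} \}$ of $\splitting(T)$ respecting $\leq_{\text{lex}}$, and replace the Sacks fusion relation by: $T' \subseteq_\alpha T$ iff $T' \subseteq T$ and $t^{T'}_\sigma = t^T_\sigma$ for all $\sigma \in \alpha^\alpha$. At step $\alpha+1$ one processes \emph{only} the terminal subtrees of the tree generated by $\{ t^{T_\alpha}_\sigma : \sigma \in (\alpha+1)^{(\alpha+1)} \}$; bounding the index sequences simultaneously in length and in value keeps just $|\alpha+1|$-many successors of each splitnode in play, so by inaccessibility the number of subtrees processed at step $\alpha$ is again $<\kappa$ and the grafting argument via $\ltimes$ goes through verbatim. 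What makes this restriction harmless is the regularity of $\kappa$: any $q$ occurring in a condition $(q,S)\leq(p,T')$ has size $<\kappa$, so the indices $\sigma$ of the splitnodes of $T'$ relevant to $q$ are bounded both in length and in supremum of range by some $\alpha<\kappa$, whence $q$ is caught as one of the $p^i_\alpha$ at stage $\alpha$ and the final verification $(q,T'{\downarrow}q)\in D$ is the same as in the Sacks case. Your outline becomes the paper's proof once ``all terminal subtrees of $T_\alpha[\alpha+1]$'' is replaced by this $(\alpha+1)^{(\alpha+1)}$-indexed family and $\leq_\alpha$ by $\subseteq_\alpha$; without that replacement the stated counting claim is false and the proof does not close.
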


Given $T \in \miller^\club$ let $\{ t^T_\sigma: \sigma \in \kappa^{<\kappa} \}$ be the natural enumeration of all splitting nodes of $T$ with the property that for every $\sigma, \sigma' \in \kappa^{<\kappa}$, $\sigma \leq_{\text{lex}} \sigma'$ iff $t_{\sigma} \leq_{\text{lex}} t_{\sigma'}$.
Given $T,T' \in \miller^\club$ we define $T' \subseteq_\alpha T$ iff $T' \subseteq T$ and for every $\sigma \in \alpha^\alpha$, $t^T_\sigma=t^{T'}_\sigma$.
\begin{proof}
The proof is analogous to the proof of quasi pure decision for $\poset{AS}_\kappa^\club$. The only difference is that at step $\alpha+1$, instead of considering all terminal subtrees of $T_\alpha[\alpha+1]$, we consider all terminal subtrees of the tree generated by the set $\{ t_\sigma^{T_\alpha}: \sigma \in (\alpha+1)^{(\alpha+1)}  \}$.
\end{proof}

As for $\asacks^\club$ we then get the following corollary.
\begin{corollary}
$\poset{AM}_\kappa^\club$ satisfies $\kappa$-Axiom A. 
\end{corollary}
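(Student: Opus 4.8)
The plan is to reproduce the two-lemma architecture used for $\asacks^\club$ in Proposition \ref{axiomA}: once quasi pure decision is available (which we have just established for $\poset{AM}_\kappa^\club$), $\kappa$-Axiom A reduces to checking its three defining clauses against the Miller analogue of the Sacks sequence of orders. Concretely, for $(p',T'),(p,T)\in\poset{AM}_\kappa^\club$ I would set
\[
(p',T')\leq_\alpha(p,T)\ \ifif\ p'=p\ \land\ T'\subseteq_\alpha T,
\]
where $\subseteq_\alpha$ is the relation introduced above via the lexicographic enumeration $\{t^T_\sigma:\sigma\in\kappa^{<\kappa}\}$ of splitting nodes. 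Clause (1) is then immediate: $\leq_0$ is $\leq$, and because the family of splitting nodes that $\subseteq_\alpha$ keeps fixed increases with $\alpha$, one gets $\leq_\beta\,\subseteq\,\leq_\alpha$ for $\alpha<\beta$. As in the Sacks case, the whole statement then follows from the quasi pure decision lemma together with an antichain-reduction lemma realizing clause (3), the remaining clause (2) being a routine fusion-by-intersection.

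For clause (2) I first note that any $\leq_\alpha$-descending sequence $\{(p_\xi,T_\xi):\xi<\kappa\}$ has constant frozen part, since already $(p_\eta,T_\eta)\leq_0(p_\xi,T_\xi)$ forces $p_\eta=p_\xi$; write $p$ for this common value. I would then put $T':=\bigcap_{\xi<\kappa}T_\xi$ and take $q:=(p,T')$. That $(p,T')\leq_\xi(p,T_\xi)$ for every $\xi$ is formal: each splitting node $t^{T_\xi}_\sigma$ is eventually frozen once $\sigma$ enters the controlled index set, so the enumeration stabilises and the frozen nodes at each level are preserved. The substantive point is that $T'\in\miller^\club$.

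For clause (3) I would copy the Sacks antichain lemma with the single modification already flagged in the proof of quasi pure decision. Given a maximal antichain $A$, its associated open dense set $D_A$, a condition $(p,T)$ and $\alpha<\kappa$, list the terminal subtrees $\{p^i:i<\delta\}$ that end-extend $p$ of the tree generated by $\{t^T_\sigma:\sigma\in(\alpha+1)^{(\alpha+1)}\}$ (rather than of $T[\alpha+1]$). Since $\kappa$ is inaccessible and hence a strong limit, $|(\alpha+1)^{(\alpha+1)}|<\kappa$, so the generating set, and therefore the number of its terminal subtrees, is $<\kappa$, giving $\delta<\kappa$. Applying quasi pure decision with $D=D_A$ to each $(p^i,T{\downarrow}p^i)$ yields $T^i$, and amalgamating them via the $\ltimes$ operation produces $T'$ with $(p,T')\leq_\alpha(p,T)$ and
\[
\{(q,S)\in A:(q,S)\not\perp(p,T')\}\ \subseteq\ \bigcup_{i<\delta}\{(q,S)\in A:(q,S)\not\perp(p^i,T^i)\}.
\]
Quasi pure decision bounds each term on the right by $\kappa$, and the $\delta$-union stays of size $\leq\kappa$.

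The hard part will be the $\miller^\club$-membership of the fusion $T'=\bigcap_\xi T_\xi$ in clause (2), which is exactly where $\kappa$-Miller departs from $\kappa$-Sacks. The branching is $\kappa$-wide with \emph{club} successor sets, so an intersection over $\kappa$-many conditions threatens both to shrink a successor set $\{\alpha:t^\conc\alpha\in T_\xi\}$ below club size and to destroy the club-ness of the splitting levels $\{\alpha:x\restric\alpha\in\splitting(T')\}$ along a branch $x$. I would control this through the coherent freezing of the enumeration: a splitting node $t^{T_\xi}_\sigma$, together with its club successor set, is preserved from the stage at which $\sigma$ first enters the controlled index set onward, so each branch and each successor set of $T'$ is the union of a $<\kappa$-indexed increasing approximation, which by $<\kappa$-closure and the regularity of $\kappa$ is again club. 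The requirement $|p|<\kappa$ built into the definition of $\poset{AM}_\kappa^\club$ is precisely what keeps the bookkeeping at every level below $\kappa$ and prevents the fusion, and hence $\kappa^+$, from collapsing.
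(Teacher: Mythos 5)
Your overall architecture is exactly the paper's: the corollary is obtained, as for $\asacks^\club$ in Proposition~\ref{axiomA}, from the quasi pure decision lemma (with the Miller-specific index set $\{t^T_\sigma:\sigma\in(\alpha+1)^{(\alpha+1)}\}$ replacing $T[\alpha+1]$) together with the Sacks-style antichain-reduction lemma, and your treatment of clauses (1) and (3) is precisely this and is fine. The paper itself gives no further detail (``As for $\asacks^\club$ we then get the following corollary''), so the only place where your proposal adds substance is clause (2) --- and that is where there is a genuine gap.

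Under the definition $T'\subseteq_\alpha T$ iff $T'\subseteq T$ and $t^{T'}_\sigma=t^T_\sigma$ for all $\sigma\in\alpha^\alpha$, the relation pins only the splitting nodes indexed by $\alpha^\alpha$; it does \emph{not} preserve their successor sets. So your key premise --- that a pinned splitting node ``together with its club successor set is preserved from the stage at which $\sigma$ first enters the controlled index set onward'' --- is false for an arbitrary $\leq_\alpha$-descending sequence: later conditions may thin $\successor(t_\sigma,T_\xi)$ at every single stage, as long as they keep the indexed successor splitting nodes and remain in $\miller^\club$. Consequently the successor set of $T'=\bigcap_{\xi<\kappa}T_\xi$ at a pinned node is in general a $\kappa$-indexed (not $<\kappa$-indexed) increasing union of $<\kappa$-sized sets of pinned directions, and neither $<\kappa$-closure nor regularity of $\kappa$ by themselves yield closedness. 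The correct argument is: unboundedness holds because the stabilized directions of $t_{\sigma^\conc\beta}$, $\beta<\kappa$, form a strictly increasing $\kappa$-sequence, hence are cofinal by regularity; closedness holds because each $\successor(t_\sigma,T_\xi)$ is club and contains every pinned direction, hence contains all limits of pinned directions, and therefore so does the intersection. A second, related repair: the raw intersection $\bigcap_\xi T_\xi$ may contain nodes incompatible with every stabilized splitting node (pieces no stage was obliged to control), on which both membership in $\miller^\club$ and even the relation $T'\subseteq_\xi T_\xi$ can fail (the natural enumeration is sensitive to stray splitting nodes squeezed between pinned ones). The fusion should therefore be taken to be the $<\kappa$-closure of the tree generated by the stabilized splitting nodes, not the bare intersection.
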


We remark that the analogous results hold for $\poset{AM}_\kappa^\U$ as well.

$\poset{AM}_\kappa^\club$ adds Cohen $\kappa$-reals, since even $\miller^\club$ itself adds Cohen $\kappa$-reals. On the contrary $\miller^\U$ does not add Cohen $\kappa$-reals, but the reader can easily realize that we can consider a construction as in the case of $\poset{AS}_\kappa$ in order to show that $\poset{AM}_\kappa$ adds Cohen $\kappa$-reals, for any $<\kappa$-closed version of amoeba forcings.  

\section{$\kappa$-Mathias and $\kappa$-Laver trees} \label{section:laver-mathias}

\paragraph{$\kappa$-Mathias forcing.}
The $\kappa$-Mathias forcing $\mathias$ for $\kappa$ uncountable is defined as the poset of pairs $(s,A)$, where $s \subset \kappa$ of size $<\kappa$ and $A \subseteq \kappa$ of size $\kappa$ such that $\sup(s) <  \min(A)$, ordered by $(t,B) \leq (s,A) \ifif t \supseteq s \land t \restric \sup s =  s \land B \subseteq A \land t \setminus s \subseteq A$. Note that this definition is equivalent to the forcing notion given in the introduction and its analogous tree-version. As for the other tree-forcings, it might be convenient to assume some further assumptions, in order to obtain a $<\kappa$-closed forcing, having some kind of fusion (such as $A \in \club$, or $A \in \mathcal{U}$, for some normal measure $\mathcal{U}$). We remark that $\kappa$-Mathias satisfies quasi pure decision and so $\kappa$-Axiom A as well, for $\kappa$ inaccessible. The proof works exactly as in the $\omega$-case, so we can omit it. We just remark that the use of $\kappa$ inaccessible is important in the proof; in fact we have to recursively run through all $<\kappa$-size subsets of a  given set of splitting levels of order type $\alpha$, for every $\alpha < \kappa$, and we need this procedure to end in $<\kappa$-many steps, for each $\alpha$.  
The situation for $\kappa$ successor in not known and it is listed as an open question in the last section.

\begin{remark} \label{club-mathias}
Let $S \subseteq \kappa$ be stationary and co-stationary. Note that for $\mathias^\club$ we have the following three straightforward facts: 
\begin{enumerate}
\item $\mathias^\club$ adds Cohen $\kappa$-reals. Let $z$ be the canonical $\mathias^\club$-generic subset of $\kappa$, and let $z:=\{ \alpha_i: i < \kappa \}$ enumerate all its element. Then define $c \in \gcantor$ by: $c(i)=0$ iff $\alpha_{i+1} \in S$. One can easily check that $c$ is $\kappa$-Cohen, by arguing that $S$ is both stationary and co-stationary, as we did above for $\asacks^\club$.
\item $\mathias^\club$ does not have pure decision. In fact, let $(s,A) \in \mathias^\club$ and put $i = |s|$. Consider the formula 
$\varphi= \alpha_{i+1} \cap S$,
where $\{ \alpha_i:i < \kappa \}$ enumerate all elements in the $\mathias$-generic $z$.
Then $\varphi$ cannot be purely decided by $(s,A)$, by a similar argument as in point 1.
\item Let $\bar f: [\kappa]^{<\kappa} \rightarrow [\kappa]^{<\kappa}$ be a map defined as follows: for every $t \in [\kappa]^{<\kappa}$, $t:= \{\alpha_i : i \leq j \}$, put $f(t)(i)= 0 \ifif \alpha_i \in S$. Then let $f: [\kappa]^\kappa \rightarrow [\kappa]^\kappa$ be the extension induced by $\bar f$. Then $f$ is obviously continuous. Moreover, if $X \subseteq [\kappa]^\kappa$ is closed nowhere dense, then $f^{-1}[X]$ is $\mathias^\club$-meager; indeed, for every $(s,A) \in \mathias^\club$, let $\sigma= \bar f(s)$, and pick $\sigma' \supseteq \sigma$ such that $[\sigma'] \cap X =\emptyset$. Note that we can pick $s \subseteq s' \subset A$, so that $\bar f(s')=\sigma'$. Then, we get $(s',A) \leq (s,A)$ and $[s',A] \cap f^{-1}[X]=\emptyset$ (where $[s,A]:= \{ x \in [\kappa]^\kappa: x \supset s \land x \subseteq A  \}$).
\end{enumerate}  
\end{remark}

Now we want to show that \emph{any} $<\kappa$-closed version of $\kappa$-Mathias forcing adds Cohen $\kappa$-reals, and in particular has no pure decision. I thank Heike Mildenberger for suggesting me an idea about $\omega$-tuples giving me a hint for the coming construction.  \

We work with the standard version of $\kappa$-Mathias forcing, but clearly an analogous construction works for the tree-version as well. For every $a,b \in [\kappa]^\omega$, we define the following equivalence relation: $a \approx b \ifif |a \bigtriangleup b| < \omega$. We also choose a representative for any equivalence class. We then define a coloring $C: [\kappa]^\omega \rightarrow \{ 0,1 \}$ as follows:
\begin{enumerate}
\item[] for $b \in [\kappa]^\omega$, let $a$ be the representative of $[b]_\approx$. Then put:
\[
C(b):= 
\begin{cases}
0
& \text{iff $a \bigtriangleup b$ is even} \\
1 & \text{else}.
\end{cases}
\] 
\end{enumerate}
Let $x \subseteq \kappa$ be the Mathias generic and $\{ \alpha_j: j < \kappa \}$ enumerate all limit ordinals $<\kappa$. $i^x(\xi)$ denotes the $\xi$th elements of $x$. Define, for $j <\kappa$,
\[
z(j):= 
\begin{cases}
0
& \text{iff $C(\{ i^x(\xi) \in x: \alpha_j \leq  \xi < \alpha_{j+1}) \})$=0} \\
1 & \text{else}
\end{cases}
\] 
(Note that $\alpha_{j+1}= \alpha_j + \omega$ and so $C$ is well defined, since the set $\{ i^x(\xi) \in x: \alpha_j \leq  \xi < \alpha_{j+1}) \} \in [\kappa]^\omega$.)
We claim $z$ is Cohen. Fix $(s,A) \in \mathias$ and let $z_{0}$ be the $<\kappa$-initial segment of $z$ already decided by $(s,A)$. Let $t \in 2^{<\kappa}$. We are going to find $A' \subseteq A$ and $s' \supseteq s$ such that $(s',A') \leq (s,A)$ and $(s',A') \force z_{0}^\conc t \subseteq z$. This will imply $z$ be Cohen. W.l.o.g., assume $(s,A)$ exactly decides the first $\alpha_\lambda+1$-many elements in $x$. Then let $b_j:=\{ i^A(\xi) \in A: \alpha_{\lambda+ j} \leq \xi < \alpha_{\lambda+j+1}   \}$, $a_j$ be the corresponding representative, and $\xi_j:= \min (b_j \cap a_j)$. We then recursively define $b'_j \subseteq b_j$, for $j < |t|$, as follows:
\[
b'_j:=
\begin{cases}
b_j
& \text{ if $C(b_j)=t(j)$} \\
b_j \setminus \{ \xi_j \}
& \text{ if $C(b_j) \neq t(j)$} 
\end{cases}
\]
Let $\Gamma:=\{\xi_j: b'_j \neq b_j \}$ and $A':= A \setminus \Gamma$. Moreover, let $s'= s \cup \sigma$, where $\sigma := \bigcup_{j < |t|} b'_j$. Hence, for every $j < |t|$, $(s', A') \force z(\lambda+j)=t(j)$, since $(s', A') \force z(\lambda+j)= C(b'_j)=t(j)$.

Note that this construction provides a counterexample to pure decision as well. Indeed, given $(s,A) \in \mathias$, pick $\alpha_\lambda$ so large that $b:= \{ i^x(\xi) \in x: \alpha_\lambda \leq  \xi < \alpha_{\lambda+1} \}$ is not decided by $(s,A)$, where $x$ is the Mathias generic. Then the formula $\varphi := \text{``} C(b)=0 \text{''}$ cannot be purely decided by $(s,A)$.

\vspace{3mm}

\begin{proposition} \label{prop:mathias-meas}
Let $\Gamma$ be a topologically reasonable family of subsets of $\kappa$-reals, i.e. $\Gamma$ closed under continuous pre-images and intersections with closed sets.  Then 
\[
\Gamma(\mathias) \Rightarrow \Gamma(\textsc{Baire}).
\]
\end{proposition}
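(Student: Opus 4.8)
The plan is to transfer $\mathias$-measurability of $\kappa$-reals to the property of Baire through the continuous coding $f$ of Remark \ref{club-mathias}, using that the Baire property in $\gcantor$ is exactly Cohen-measurability. Let $X\in\Gamma$ be a set of $\kappa$-reals. Since $\Gamma$ is closed under continuous preimages, $Y:=f^{-1}[X]\in\Gamma$, so $Y$ is $\mathias$-measurable by the hypothesis $\Gamma(\mathias)$. By Definition \ref{def:meager} this means that the union of $\poset{D}_0:=\{(s,A):[s,A]\setminus Y\in\ideal{I}_{\mathias}\}$ and $\poset{D}_1:=\{(s,A):[s,A]\cap Y\in\ideal{I}_{\mathias}\}$ is dense. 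I will use two facts about $f$: first, that $f$ is the stationary coding of Remark \ref{club-mathias}, continuous with $f^{-1}[M]\in\ideal{I}_{\mathias}$ for every meager $M$; and second, that for every condition $(s,A)$ one has $f[[s,A]]=[\bar f(s)]$, because within the club $A$ both $S$ and $\kappa\setminus S$ occur cofinally, so any extension of the pattern $\bar f(s)$ is realisable inside $A$. In particular $f$ maps conditions to basic open sets. Setting $U:=\bigcup\{f[[s,A]]:(s,A)\in\poset{D}_0\}$ and $U':=\bigcup\{f[[s,A]]:(s,A)\in\poset{D}_1\}$, both are open, and the goal is to show $X\bigtriangleup U\in\meager$, which is the property of Baire for $X$.

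First I would record the easy ingredients. Since $\poset{D}_0\cup\poset{D}_1$ is dense, for every basic open $[u]$ one takes the Mathias cone whose stem realises $u$, refines it into $\poset{D}_0\cup\poset{D}_1$, and by the openness fact lands a nonempty open subset of $[u]$ inside $U\cup U'$; hence $U\cup U'$ is open dense, thus comeager. Next, using the elementary identity $f[B\cap f^{-1}[E]]=f[B]\cap E$, for $(s,A)\in\poset{D}_1$ one has $f[[s,A]]\cap X=f[[s,A]\cap Y]$, the image of an $\ideal{I}_{\mathias}$-set; granting that this image is meager and summing over a $\kappa$-sized subfamily cofinal in $U'$ gives $U'\cap X\in\meager$. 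From this, $X\setminus U\subseteq(X\cap U')\cup(\gcantor\setminus(U\cup U'))$ is meager, which is one half of $X\bigtriangleup U$; the other half, $U\setminus X$, is handled symmetrically from $\poset{D}_0$.

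The heart of the proof — and the step I expect to be the main obstacle — is a category-preservation in the \emph{pushforward} direction, which fails for arbitrary continuous maps but is forced here by the stationary coding: for every condition $(s,A)$ and every $G$ that is $\mathias$-comeager in $[s,A]$, the image $f[[s,A]\cap G]$ is comeager in $f[[s,A]]=[\bar f(s)]$. The naive statement ``$f$ sends $\ideal{I}_{\mathias}$-sets to meager sets'' is simply false, since a single $\mathias$-nowhere dense set can meet every fibre of $f$; this is precisely why the pullback property of Remark \ref{club-mathias}, although the obvious tool, does not by itself suffice. The repair is to intersect with a whole condition $[s,A]$ before projecting. I would prove the claim by a fusion/Banach--Mazur argument: writing $[s,A]\setminus G\subseteq\bigcup_{\xi<\kappa}N_\xi$ with each $N_\xi$ $\mathias$-nowhere dense, I build against any play of basic open sets shrinking inside $[\bar f(s)]$ a decreasing $\kappa$-sequence of conditions $(s_\xi,A_\xi)\leq(s,A)$ such that $[\bar f(s_\xi)]$ refines the opponent's moves, $[s_{\xi+1},A_{\xi+1}]\cap N_\xi=\emptyset$, and the stems grow cofinally. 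The essential use of $S$ being stationary \emph{and} co-stationary (exactly as in Remark \ref{club-mathias}) is that at each step the required Cohen pattern can be realised inside the club $A_\xi$ while simultaneously diagonalising against $N_\xi$. Using $<\kappa$-closure at limits, the fusion yields a single $x^\ast\in[s,A]\cap\bigcap_\xi(\gcantor\setminus N_\xi)\subseteq[s,A]\cap G$ whose image is the played point, so the second player can force the outcome into $f[[s,A]\cap G]$, whence this set is comeager by the $\kappa$-Banach--Mazur theorem.

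Finally I would assemble the pieces. Applying the claim to $G=[s,A]\cap Y$ for $(s,A)\in\poset{D}_0$ gives $f[[s,A]]\cap X=f[[s,A]\cap Y]$ comeager in $[\bar f(s)]$, so $f[[s,A]]\setminus X$ is meager and $U\setminus X$ is a $\kappa$-union of meager sets, hence meager; the symmetric application to $\poset{D}_1$ gives $U'\cap X\in\meager$, completing both halves above and yielding $X\bigtriangleup U\in\meager$. Two remarks on bookkeeping: the closure of $\Gamma$ under intersections with closed sets is what lets me pass to the localisations $Y\cap[s,A]\in\Gamma$ used to read off the dichotomy condition by condition, and the whole argument is run for the $<\kappa$-closed version of $\mathias$ (club, or equally the normal-measure version), which is the setting in which $f$ enjoys the openness property $f[[s,A]]=[\bar f(s)]$; for an unrestricted $A$ the image can collapse to a point and the argument breaks, so some largeness of $A$ is genuinely required.
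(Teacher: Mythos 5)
Your overall skeleton (transfer along a continuous coding whose images of conditions are basic open sets, a set-identity $f[B\cap f^{-1}[E]]=f[B]\cap E$ to split into the two dichotomy cases, and a pushforward-of-comeagerness claim proved by fusion/Banach--Mazur) is exactly the shape of the paper's proof, and restricted to $\mathias^\club$ your assembly of $U$, $U'$ and $X\bigtriangleup U\in\meager$ would go through. The genuine gap is your choice of coding map. You use the stationary-set coding $f$ of Remark \ref{club-mathias}, and your argument needs the openness property $f[[s,A]]=[\bar f(s)]$ for \emph{every} condition $(s,A)$; this holds only when $A$ meets both $S$ and $\kappa\setminus S$ cofinally, i.e., essentially only in the club version. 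For the other versions the proposition is meant to cover, it fails outright: if $\U$ is a normal measure, then $\U$ decides $S$, say $\kappa\setminus S\in\U$, and the condition $(s,A)$ with $A=(\kappa\setminus S)\setminus(\sup(s)+1)\in\U$ forces every later element of the generic to avoid $S$, so $f[[s,A]]$ collapses to a single point. Your closing sentence, claiming the normal-measure version works ``equally'', is therefore false --- there is no ``stationary and co-stationary'' analogue relative to an ultrafilter, which decides every set. Since the club case was already known (\cite{FKK14}, as the paper's concluding table records), and the entire point of Proposition \ref{prop:mathias-meas} and Corollary \ref{cor:mathias-meas} is to cover Mathias forcings \emph{without} fat splitting ($\mathias^*$, $\mathias^\U$) --- the paper stresses this is the first $\SSigma^1_1$-counterexample of that kind --- your argument proves a strictly weaker statement than the one asked.

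The paper's coding avoids this by using parity modulo finite instead of stationarity: the generic is cut into $\omega$-blocks $\{i^x(\xi): \alpha_j\le\xi<\alpha_{j+1}\}$, and each block is colored by the parity of its finite symmetric difference with a fixed representative of its $\approx$-class. Toggling the color of a block requires deleting a single point of it, which is possible inside \emph{any} second coordinate $A$ of size $\kappa$; hence the analogue of your openness property, $h[[T]]=[h^*(\stem(T))]$, holds for every condition of every $<\kappa$-closed version of $\mathias$, with no largeness assumption on $A$. With $f$ replaced by this $h$, the rest of your architecture (which closely parallels the paper's Case 1/Case 2 dichotomy) can be run as you wrote it; you would still need to write out the fusion construction behind your pushforward claim, which is the real technical content and appears in the paper as the scheme of conditions $T_\sigma$, $\sigma\in\kappa^{<\kappa}$, whose stems' $h^*$-images form a dense Cantor-type scheme inside $[h^*(\stem(T))]$.
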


\begin{proof}
Let $\{ \alpha_j: j < \kappa \}$ enumerate all limit ordinals $<\kappa$ (but starting with $\alpha_0=0$) and without loss of generality we consider trees $T \in \mathias$  for which there exist $j< \kappa $ so that 
\[
\text{$\{ \xi < \kappa: \stem(T)(\xi)=1\}$ has order type $\alpha_j$.} 
\]
(Note that such trees form a dense subposet of $\mathias$, as one can always lengthen the stem with as many 1s as we need in order to catch the subsequent limit ordinal.)

Let $H$ consist of the sequences in $2^\kappa$ which are not eventually equal 0. Define $h : H \rightarrow H$ so that, for every $x \in H$,
\[
h(x)(j):= C(\{ i^x(\xi) \in x: \alpha_j \leq  \xi < \alpha_{j+1}  \}).
\]
For every $j < \kappa$, put $H_j:= \{ t \in 2^{<\kappa}: |\{ \xi: t(\xi)=1 \}| \text{ has order type $\alpha_j$} \}$. Let $h^*: \bigcup_{j < \kappa} H_j \rightarrow \bigcup_{j < \kappa} H_j$ be the function induced by $h$, i.e., $h^*$ is such that for every $x \in H$, 
\[
h(x):= \lim_{j<\kappa} h^*(x \restric \alpha_j).
\]
Note that any subset of $2^\kappa$ differs from $X \cap H$ by a set of size $\leq \kappa$, and so it does not affect either the $\mathias$-measurability or the Baire property.

It is easy to check that $h$ is continuous and surjective. Moreover, for every $T \in \mathias$ one has $h[[T]]=[h^* (\stem(T))]$.   
As an immediate consequence, for every $X \subseteq 2^\kappa$, if $h^{-1}[X]$ is $\mathias$-open dense, then $X$ is open dense.

Fix $X \in \Gamma$ and let $Y:= h^{-1}[X]$. We want to show that $X$ has the Baire property. Note that $Y \in \Gamma$ too, and so it is $\mathias$-measurable. This provides us with two possible cases.

\underline{Case 1}: there is $T \in \mathias$ such that $[T] \cap Y$ is $\mathias$-comeager, and so there are sets $B_\alpha$, $\alpha < \kappa$, so that each $B_\alpha$ is $\mathias$-open dense in $[T]$ and $\bigcap_{\alpha<\kappa} B_\alpha \subseteq Y \cap [T]$.  We claim that $X$ is comeager in $h^*(\stem(T))$. Put $t:=h^*(\stem(T))$. We aim at building a sequence $\{ U_i: i < \kappa \}$ of open dense sets in $[t]$ such that $\bigcap_{i< \kappa} U_i \subseteq [t] \cap X$, which means $X$ is comeager in $[t]$. For $\sigma \in \kappa^{<\kappa}$ define $T _\sigma \in \mathias$ such that: 
\begin{enumerate}
\item $T_{\langle \rangle}:= T$; 
\item $\bigcup_{j < \kappa} [h^*(\stem(T_{\sigma^\conc j}))] \subseteq [h^*(\stem(T_\sigma))]$ is comeager in $[h^*(\stem(T_\sigma))]$;  
\item for every $\alpha < \kappa$, $\sigma \in \kappa^{<\kappa}$ such that $|\sigma|=\alpha$, we have $\bigcup_{j<\kappa}[T_{\sigma^\conc j}] \subseteq \bigcap_{\beta \leq \alpha} B_\beta$;
\item for every $j \in \kappa$, $|\stem(T_{\sigma^\conc j})| > |\stem(T_\sigma)|$;
\item for every $\eta \in \kappa^\kappa$ there is (unique) $z \in Y$ such that $\bigcap_{i<\kappa} [T_{\eta\restric i}] = \{ z \}$;
\item $\bigcap_{i < \kappa} U_i$ can be written as $\bigcap_{i < \kappa} \bigcup_{|\sigma|=i} h[[T_{\sigma}]]$.
\end{enumerate}
 This can be done as follows.
Fix $\alpha < \kappa$ and $\sigma \in \kappa^{<\kappa}$ such that $|\sigma|=\alpha$. Given $\tau \in 2^{<\kappa}$, by definition of $h^*$ and the same argument used for proving $\mathias$ adds Cohen $\kappa$-reals, we can pick $S(\tau) \leq T_\sigma$ such that $h^*(\stem(S(\tau)))=h^*(\stem(T_\sigma))^\conc \tau$; then, by using the fact that each $B_\alpha$ is $\mathias$-open dense, we can find $T(\tau) \leq S(\tau)$ such that $[T(\tau)] \subseteq \bigcap_{\beta \leq \alpha}B_\beta$. Then let $\{ T_{\sigma^\conc j}: j < \kappa \}$ enumerate all such $T(\tau)$'s, for $\tau \in 2^{<\kappa}$. 
 
Now let $t_\sigma:= h^*(\stem(T_\sigma))$, for all $\sigma \in \kappa^{<\kappa}$. Find $a_\sigma \subseteq \kappa$ such that:
\begin{itemize}
\item for $i,j \in a_\sigma$, $[t_{\sigma^\conc i}] \cap [t_{\sigma^\conc j}]=\emptyset$
\item $\bigcup_{j \in a_\sigma} [t_{\sigma^\conc j}]$ is open dense in $t_\sigma$.
\end{itemize}
Note this can be done by refining the choices of $T_{\sigma^\conc j}$'s. 
Then define by recursion: $A_0=\{ \langle \rangle \}$, $A_{i+1}=\bigcup_{\sigma \in A_i} \{ \sigma^\conc j: j \in a_\sigma \}$, and put $U_i:= \bigcup_{\sigma \in A_i} [t_\sigma]$.

Then clearly $U:=\bigcap_{i < \kappa} U_i$ is dense in $[t]:= [h^*(\stem(T))]$. Finally, $U \subseteq X$; indeed given $y \in U$, the construction of the $t_\sigma$'s provides us with a unique $\eta \in \kappa^\kappa$ such that $y \in \bigcap_{i<\kappa} [t_{\eta\restric i}]$. Also the construction of the $T_\sigma$'s gives a unique $z \in \bigcap_{i<\kappa} [T_{\eta\restric i}]$, and $h(z)=y$. But $z \in Y:= h^{-1}[X]$, and so $y \in X$.

\underline{Case 2}: for densely many $T \in \mathias$, it holds $[T] \cap Y \in \ideal{I}_{\mathias}$. Hence, for densely many $s \in 2^{<\kappa}$ one has $[s] \cap X$ is $\kappa$-meager, which means that $X$ has the Baire property (following the notation of Definition \ref{def:meager}, the Baire property is equivalent to $\cohen$-measurablity). 
\end{proof}

By picking $h^{-1}[\club]$ we then obtain the following straightforward consequence.

\begin{corollary} \label{cor:mathias-meas}
There is a $\SSigma^1_1$ set that is not $\mathias$-measurable.
\end{corollary}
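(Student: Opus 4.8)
The plan is to manufacture the counterexample by pushing a known analytic set without the Baire property backwards through the map $h$ of Proposition~\ref{prop:mathias-meas}. First I would note that the family $\SSigma^1_1$ of analytic subsets of $2^\kappa$ is topologically reasonable in the sense of that proposition: it is closed under intersections with closed sets and under continuous pre-images, in particular under $h^{-1}$ since $h$ is continuous. Thus Proposition~\ref{prop:mathias-meas} applies to $\Gamma = \SSigma^1_1$ and, read contrapositively, tells us that to obtain a non-$\mathias$-measurable $\SSigma^1_1$ set it is enough to produce an analytic set lacking the Baire property and transport it along $h$.

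For the witness I would take $X := \club \subseteq 2^\kappa$, the club filter, i.e. the set of $x$ for which $\{\alpha < \kappa : x(\alpha) = 1\}$ contains a club. Two facts about $X$ are needed. The first is a routine definability count: $x \in X$ iff there is a strictly increasing, continuous $e \in \kappa^\kappa$ with $x(e(\alpha)) = 1$ for all $\alpha < \kappa$ (the range of such an $e$ being exactly a club inside the $1$-set of $x$), which is an existential $\kappa$-real quantifier over a closed matrix; hence $X$ is $\SSigma^1_1$. The second, and only external, ingredient is the classical fact (see \cite{FKK14}) that the club filter does \emph{not} have the Baire property.

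Now set $Y := h^{-1}[\club]$. Since $h$ is continuous (and defined on the Borel, co-$\kappa$-small set $H$, whose complement has size $\leq \kappa$ and so affects neither $\mathias$-measurability nor the Baire property, exactly as remarked in the proof of Proposition~\ref{prop:mathias-meas}), the set $Y$ is again $\SSigma^1_1$. I claim $Y$ is not $\mathias$-measurable. Indeed, suppose it were: then the Case~1 / Case~2 dichotomy carried out in the proof of Proposition~\ref{prop:mathias-meas} with $X = \club$ would conclude that $\club$ has the Baire property, contradicting the previous paragraph. Hence $Y = h^{-1}[\club]$ is a $\SSigma^1_1$ set that fails $\mathias$-measurability, which is the corollary.

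The only genuinely nontrivial step is the imported theorem that $\club$ lacks the Baire property; the rest is either a mechanical complexity computation (for the $\SSigma^1_1$-ness of $\club$ and of its $h$-preimage) or a direct invocation of Proposition~\ref{prop:mathias-meas}, whose proof already contains the implication ``$h^{-1}[X]$ is $\mathias$-measurable $\Rightarrow$ $X$ has the Baire property'' that we use here with $X = \club$.
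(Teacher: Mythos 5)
Your proof is correct and is essentially the paper's own argument: the paper obtains the corollary precisely ``by picking $h^{-1}[\club]$'', i.e., by pulling the club filter (a $\SSigma^1_1$ set known to lack the Baire property) back through the continuous map $h$ and invoking the implication established in the proof of Proposition~\ref{prop:mathias-meas}. Your write-up simply makes explicit the routine steps the paper leaves implicit (that $\SSigma^1_1$ is topologically reasonable, that $\club$ is $\SSigma^1_1$, and that the preimage stays $\SSigma^1_1$), so there is nothing to add.
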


This is a rather surprising result; indeed, to our knowledge, it is the first example where a tree-measurability fails at $\SSigma^1_1$ for trees  without ``fat" splitting (e.g., club).

\paragraph{$\kappa$-Laver forcing.} First we consider $\laver^\club$, which consists of trees $T \subseteq \kappa^{<\kappa}$ such that:
\begin{itemize}
\item $\forall t \supseteq \stem(T) (t \in \splitting(T))$;
\item $\forall t \in \splitting(T) (\successor(t) \text{ is club})$;
\end{itemize}  

For $\laver^\club$ we have an analogue of Remark \ref{club-mathias}.
Like for the $\kappa$-Mathias forcing, we can consider version without club splitting. For $\kappa$ inaccessible, a standard proof shows that $\kappa$-Laver satisfies quasi pure decision and $\kappa$-Axiom A.
We want to show, for $\kappa=\omega_1$, we can build a Cohen $\omega_1$-real, and implicitly a sentence that cannot be purely decided. So let $\laverr$ denote any version of Laver forcing at $\omega_1$ with possibly any extra requirement on the splitting nodes in order to have $<\omega_1$-closure and fusion. 

Aiming at that, we first consider the following version of Laver forcing $\lav(\omega_1,\omega)$ in $\omega_1^\omega$. We say $T \in \lav(\omega_1, \omega)$ iff $T \subseteq \omega_1^{<\omega}$ is a tree such that for every $t \supseteq \stem(T)$, $|\successor(t,T)|=\omega$. Note that a diagonalization against trees in $\laverr(\omega_1,\omega)$ provides us with a Bernstein-type set $X$, i.e., $X \subseteq \omega_1^\omega$ such that for every $T \in \lav(\omega_1,\omega)$ one has $X \cap [T] \neq \emptyset$ and $X \setminus [T] \neq \emptyset$. So we can build the following Cohen $\omega_1$-real. 

Let $z \in \omega_1^{\omega_1}$ and $\{ \alpha_j:j<\omega_1 \}$ enumerate all limit ordinals $<\omega_1$ (but starting with $\alpha_0=0$) and let $A_z(j):= \langle z(\xi): \alpha_j \leq \xi < \alpha_{j+1}  \rangle$. Note we can view $A_z(j)$ as an element of $\omega_1^\omega$. Define $h_z \in 2^{\omega_1}$ as: $h_z(j)=1 \ifif A_z(j) \in X$.

Now let $x \in \omega_1^{\omega_1}$ be $\laverr$-generic and put $c=h_x$.
We claim $c$ is Cohen. Indeed, given $T \in \laverr$, let $c_T$ be the initial segment of $c$ already decided by $T$, and fix $t \in 2^{<\omega_1}$ arbitrarily. W.l.o.g. we can assume $|\stem(T)|$ be a limit ordinal. We have to find $T' \leq T$ such that $T' \force c_T^\conc t \subseteq c$. We recursively build the set $\{ \sigma_j:  j < |t| \}$ consisting of elements of $\omega_1^\omega$ in order to obtain:
\begin{itemize}
\item[$j=0$.] $\stem(T)^\conc \sigma_0 \in T$ such that $\sigma_0 \in X$ iff $t(0)=1$;
\item[$j$ successor.] $\stem(T)^\conc (\oplus_{i<j}\sigma_i)^\conc \sigma_{j} \in T$ such that $\sigma_{j} \in X$ iff $t(j)=1$ (where $\oplus_{i<j} \sigma_i$ simply consists of the concatenation of the $\sigma_i$'s, for $i<j$);
\item[$j$ limit.]  $\sigma_j:= \oplus_{i<j}\sigma_i$.
\end{itemize}
Finally put $\sigma:= \stem(T)^\conc (\oplus_{j < |t|} \sigma_j)$ and $T':= T_\sigma$. By construction, for every $j < |t|$,  $T' \force A_x(j) \in X \Leftrightarrow t(j)=1$, and so $T' \force c(j)=t(j)$, as desired.

Like for $\kappa$-Mathias forcing, this idea provides us with a counterexample to pure decision too. Indeed, given $T \in \laverr$, pick $j \in \omega_1$ ordinal large enough so that  $T$ does not decide $c(j)$. Then the formula $\varphi= \text{``} c(j)=1  \text{''}$ cannot be purely decided by $T$.

Hence, for $\kappa=\omega_1$, an analogue of Proposition \ref{prop:mathias-meas} and Corollary \ref{cor:mathias-meas} holds for Laver measurability as well.
\begin{proposition} \label{prop:laver-meas}
$\Gamma(\laverr)$ implies $\Gamma(\textsc{Baire})$, for $\Gamma$ topologically reasonable family. As a corollary, there is a $\SSigma^1_1$ set which is not $\laverr$-measurable.
\end{proposition}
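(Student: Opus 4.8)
The plan is to mirror the structure of Proposition \ref{prop:mathias-meas}, replacing the Mathias coloring map with the analogous map built from the Bernstein-type set $X \subseteq \omega_1^\omega$ obtained by diagonalizing against $\lav(\omega_1,\omega)$. First I would set up a continuous surjection $h: \omega_1^{\omega_1} \to 2^{\omega_1}$ defined exactly as in the Cohen $\omega_1$-real construction just given: fixing the enumeration $\{\alpha_j : j < \omega_1\}$ of limit ordinals and letting $A_z(j) := \langle z(\xi): \alpha_j \leq \xi < \alpha_{j+1}\rangle \in \omega_1^\omega$, I would put $h(z)(j) = 1 \ifif A_z(j) \in X$. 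As in the Mathias case, this $h$ is induced by a level function $h^*$ on initial segments, and the key structural property I would verify is that for every $T \in \laverr$ one has $h[[T]] = [h^*(\stem(T))]$, with the surjectivity of $h$ onto (basic open sets of) $2^{\omega_1}$ guaranteed precisely by the Bernstein property of $X$: since $X$ meets and misses every $[T']$ for $T' \in \lav(\omega_1,\omega)$, below any condition we can steer each block $A_x(j)$ into $X$ or its complement at will, which is exactly the content of the Cohen-real argument above.

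Next I would carry out the two-case measurability argument verbatim from Proposition \ref{prop:mathias-meas}. Given $X' \in \Gamma$, set $Y := h^{-1}[X']$; since $\Gamma$ is topologically reasonable (closed under continuous preimages), $Y \in \Gamma$, hence $Y$ is $\laverr$-measurable by the hypothesis $\Gamma(\laverr)$. In Case 1, where $[T] \cap Y$ is $\laverr$-comeager for some $T$, I would build the tree $\{T_\sigma : \sigma \in \omega_1^{<\omega_1}\}$ of conditions refining $T$ so that the images $h[[T_\sigma]]$ form a system of basic open sets witnessing that $X'$ is comeager in $[h^*(\stem(T))]$; the Cohen-steering lemma replaces the Mathias steering used there, and the open-dense systems $B_\alpha \subseteq [T]$ are absorbed into the $T_\sigma$ as before. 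In Case 2, where $[T] \cap Y$ is $\laverr$-meager densely often, the identity $h[[T]] = [h^*(\stem(T))]$ transfers this to $[s] \cap X'$ being $\kappa$-meager for densely many $s \in 2^{<\omega_1}$, yielding the Baire property. The corollary then follows by taking $X' = h^{-1}[\club]$, which is $\SSigma^1_1$ but whose image under $h$ is a club-coded set failing the Baire property, so $X'$ is not $\laverr$-measurable.

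The main obstacle I expect lies in the surjectivity and the fiber-control of $h$, which is more delicate than in the Mathias case. There the coloring $C$ on $[\kappa]^\omega$ had a clean parity structure giving a transparent density argument; here the Bernstein set $X$ is only guaranteed to separate bodies of $\lav(\omega_1,\omega)$-trees, so I must check that below an arbitrary $\laverr$-condition the finite-support steering of a single block $A_x(j)$ into $X$ or $\omega_1^\omega \setminus X$ can genuinely be performed \emph{within} $\laverr$ (respecting whatever extra splitting requirement $\laverr$ carries), and that these local choices assemble coherently into a genuine continuous $h$ with $h[[T]] = [h^*(\stem(T))]$. This is exactly the place where the earlier Cohen-real construction does the work: the block $A_x(j)$ ranges over an $\omega$-branch of a $\lav(\omega_1,\omega)$-subtree sitting above any splitnode, and the Bernstein property of $X$ is precisely what lets us hit either side; so the obstacle is really one of bookkeeping rather than new ideas, and once the level function $h^*$ and the relation $h[[T]]=[h^*(\stem(T))]$ are pinned down, the two-case argument of Proposition \ref{prop:mathias-meas} applies unchanged.
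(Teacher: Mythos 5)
Your proposal is correct and matches the paper's intended argument: the paper itself gives no separate proof of this proposition, asserting only that the Bernstein-set construction of the Cohen $\omega_1$-real from the $\laverr$-generic yields the analogue of Proposition \ref{prop:mathias-meas}, which is precisely what you carry out --- defining $h$ blockwise via $A_z(j) \in X$, verifying $h[[T]]=[h^*(\stem(T))]$ by Bernstein steering, running the two-case measurability argument, and taking $h^{-1}[\club]$ for the corollary. Your closing observation that the only delicate point is performing the block-steering inside $\laverr$ (rather than in $\lav(\omega_1,\omega)$) is also exactly where the paper's preceding Cohen-real construction does the work.
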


Actually, if one looks at the proof, one can easily realize that it perfectly works for any $\kappa \leq 2^\omega$, as the argument for building a Bernstein sets works in such cases as well. On the contrary, if $\kappa > 2^\omega$ then we have too many trees compared to the possible branches we can select, and so the Bernstein-type construction of $X$ does not work anymore.
 It then remains open what about the case $\kappa >2^\omega$.

\section{Concluding remarks}

In \cite{Lag15} it was proven that if one drops the club splitting on the trees then it is possible to obtain a tree-measurability which can be forced for all projective sets (e.g., for Silver forcing) or in other cases for $\SSigma^1_1$ sets (e.g., for Miller forcing). On the other hand, in this paper we have seen that for $\kappa$-Mathias and $\omega_1$-Laver measurability this is subject to more restriction, as specified in Proposition \ref{prop:mathias-meas} and \ref{prop:laver-meas}. In the following table we summarize the currently known situation. In the column ``$\SSigma^1_1$-counterexample" we list all cases for which the existence of a non-measurable $\SSigma^1_1$ set is provable in ZFC; in the column ``Forceable" we list all cases for which $\SSigma^1_1$ or even projective measurability is forceable; the last column obviously exhibits the open questions. 
\begin{center}
\begin{tabular}{|l|l|l|r|}
\hline
\rule[-4mm]{0mm}{1mm}
\textsc{Forcing notion} & $\SSigma^1_1$-counterexample & Forceable & Unknown \\
\hline
\rule[-3mm]{0mm}{7mm}
\textsc{Sacks} & $\sacks^\club$ (\cite{FKK14}) & $\sacks$ (\cite{Lag15}) & $\sacks^\ideal{U}$ \\
\hline
\rule[-3mm]{0mm}{7mm}
\textsc{Silver} & $\silver^\club$ (\cite{Lag15}) & $\silver$ (\cite{Lag15})  & $\silver^\ideal{U}$ \\
\hline
\rule[-3mm]{0mm}{7mm}
\textsc{Miller} & $\miller^\club$ (\cite{FKK14}) &  $\miller$ (\cite{Lag15})  &  $\miller^\ideal{U}$ \\
\hline
\rule[-3mm]{0mm}{7mm}
\textsc{Laver} &  $\laver^\club$ (\cite{FKK14}), $\laverr^*$, $\laverr^\ideal{U}$ (Prop.\ref{prop:laver-meas}) & & $\laver$, $\laver^\ideal{U}$ , $\kappa>\omega_1$\\
\hline
\rule[-3mm]{0mm}{7mm}
\textsc{Mathias} & $\mathias^\club$ (\cite{FKK14}), $\mathias^*$, $\mathias^\ideal{U}$ (Cor.\ref{cor:mathias-meas}) & & \\
\hline
\rule[-3mm]{0mm}{7mm}
\textsc{Cohen} & $\cohen$ (\cite{HS01}) & & \\ 
\hline
\end{tabular}
\end{center} 
(Recall $\cohen$-measurability is simply the Baire property; the $*$ for $\laver$ and $\mathias$ simply mean that we require $<\kappa$-closure of the forcing together with fusion.)

\vspace{3mm}

About this type of questions, concerning the concistency of certain regularity properties for a given family of sets, we remark that an important tool used in the standard $\omega$-case is the amalgamation of Boolean algebras. It was originally introduced by Shelah in \cite{Sh84} for proving the concistency of the Baire property in the $\omega$-case for all projective sets without using an inaccessible cardinal. Other applications of Shelah's amalgamation were presented in \cite{JR93} and \cite{Lag14-bis}, were the authors proved some results about separating different notions of regularity properties. 

An interesting point to investigate would be to what extend we can generalize Shelah's amalgamation in our generalized context with $\kappa>\omega$. We know that a rough and trivial generalization cannot work properly, as we know that the Baire property fails for $\SSigma^1_1$-sets. Indeed if we look at Shelah's construction, we can realize that in general the amalgamation does not ensure $<\kappa$-closure; in the $\omega$-case this was not a point, as any tree-forcing is trivially $<\omega$-closed, and $\omega$ is preserved. The point is that amalgamation might collapse $\kappa$. A possible solution that we aim to further investigate could be to amalgamate in order to obtain strong homogeneity over a restricted set of $\kappa$-branches only, instead of all. (This idea was also used in \cite{Lag15} for proving that all projective sets are $\silver$-measurable, where we used strong homogeneity of Cohen $\kappa$-branches of a Silver tree.)   

\vspace{2mm}
About generic trees we recall the main questions that remain open.

\begin{question2}
Let $\meager_\kappa$ be the ideal of $\kappa$-meager sets, $I_{\clubsacks}$ is the ideal of $\clubsacks$-meager sets, and $\leq_T$ denotes Tukey embedding. Is $\meager_\kappa \leq_T I_{\clubsacks}$?
\end{question2}

\begin{question2} Can we prove an analogue of Proposition \ref{sacks-amoeba-cohen} for $\asacks$ (without club-splitting) and the other tree-forcings? In other words: can we prove that if $N \supseteq V$ is a ZFC-model containing an absolute $\sacks$-generic tree over $V$, then there is $c \in 2^\kappa \cap N$ Cohen over $V$? 
\end{question2}

Finally we remark that in all proofs about $\kappa$-Axiom A, we use that $\kappa$ be inaccessible. So the following is still open.
\begin{question2}
Can one prove $\kappa$-Axiom A for the amoebas and tree-forcings analysed in this paper, for $\kappa$ regular successor?
\end{question2}


\begin{thebibliography}{9}
\bibitem{Bre98} J\"org Brendle,
                \emph{How small can the set of generic be?}, Logic Colloquium '98, pp. 109-126, Lect. Notes Logic, 13, Assoc. Symb. Logic, Urbana, IL, (2000).
                
\bibitem{BL99} J\"org Brendle, Benedikt l\"owe,
                \emph{Solovay-type characterizations for forcing-algebra}, Journal of Symbolic Logic, Vol. 64, No. 3, pp. 1307-1323 (1999). 
                
\bibitem{FZ07} Sy D. Friedman, Lyubomyr Zdomskyy,
                \emph{Measurable cardinals and the cofinality of the symmetric group}, Fundamenta Mathematicae 207, pp 101-122, (2010).                  
                
\bibitem{FKK14} Sy D. Friedman, Yurii Khomskii, Vadim Kulikov,
                \emph{Regularity properties on the generalized reals}, Annals of Pure and Applied Logic 167, pp 408-430, (2016).
                

\bibitem{HS01} Aapo Halko, Saharon Shelah,
                \emph{On strong measure zero set in $2^\kappa$}, Fundamenta Mathematicae, Volume 170, n. 3, pp 219-229, (2001).
                 
\bibitem{JR93}Haim Judah, Andrej Roslanowsky,
                \emph{On Shelah's amalgamation},
                Israel Mathematical Conference Proceedings, Vol. 6 (1993), pp 385-414.
                                
\bibitem{Kan80} Akiro Kanamori,
                \emph{Perfect set forcing for uncountable cardinals}, Annals of Mathematical Logic, n. 19, pp 97-114, (1980). 
                                               
\bibitem{Lag14} Giorgio Laguzzi,
                \emph{Some considerations on amoeba forcing notions}, Archive for Mathematical Logic, Volume 53, Issue 5-6, pp 487-502, (2014).  
                
\bibitem{Lag14-bis} Giorgio Laguzzi,
                \emph{On the separation of regularity properties of the reals}, Archive for Mathematical Logic, Volume 53, Issue 7-8, pp 731-747, (2014).     
              
\bibitem{Lag15} Giorgio Laguzzi,
                \emph{Generalized Silver and Miller measurability}, Mathematical Logic Quarterly, Volume 61, Issue 1-2, pp 91-102 (2015).
                
\bibitem{LLS15} Yurii Khomskii, Giorgio Laguzzi, Benedikt L\"owe, Ilya Sharankou,
                \emph{Questions on generalized Baire spaces}, Mathematical Logic Quarterly,  Volume 62, Issue 4-5 (2016). 
                
\bibitem{LShV93} A. Louveau, S. Shelah, B. Velickovic,
                \emph{Borel partitions of infinite subtrees of a perfect tree}, Ann. Pure App. Logic 63, pp 271-281 (1993).
                
\bibitem{Sh84}Saharon Shelah,
                \emph{Can you take Solovay's inaccessible away?},
                Israel Journal of Mathematics, Vol. 48 (1985), pp 1-47.               
                
\bibitem{Sp95} Otmar Spinas,
                \emph{Generic trees}, Journal of Symbolic Logic, Vol. 60, No. 3, pp. 705-726 (1995).     
                
\bibitem{Sp15} Otmar Spinas,
                \emph{Silver trees and Cohen reals}, Israel Journal of Mathematics  211, 473-480, (2016).                   
                

             

                 
                             
\end{thebibliography}
\end{document}